\documentclass[12pt]{article}
	\usepackage{amsmath,fullpage,amsthm,amssymb,xcolor,graphicx}
	\newtheorem{prop}{Proposition}[section]
	\newtheorem{theorem}{Theorem}[section]
	\newtheorem{corollary}{Corollary}[section]
	\newtheorem{lemma}{Lemma}[section]
	
	\newtheorem{remark}{Remark}[section]
	\newtheorem{example}{Example}[section]

    \DeclareMathOperator{\nullity}{nullity}
	\DeclareMathOperator{\spec}{spec}
		\DeclareMathOperator{\diag}{diag}
		\DeclareMathOperator{\Rea}{Re}
			\DeclareMathOperator{\Ima}{Im}
				\DeclareMathOperator{\Tr}{Tr}
       
       \renewcommand{\det}{\operatorname{det}}
	\title{Spectral properties of distance Laplacian matrices of complex unit gain graphs}
\author{Aniruddha Samanta \thanks{Theoretical Statistics and Mathematics Unit, Indian Statistical Institute, Kolkata-700108, India. Email: aniruddha.sam@gmail.com}\ \and Deepshikha \thanks{Department of Mathematics, Shyampur Siddheswari Mahavidyalaya, University of Calcutta, West Bengal 711312, India. Email: dpmmehra@gmail.com }}

\date{\today}
\begin{document}
\maketitle
\baselineskip=0.25in

\begin{abstract}
A complex unit gain graph ($ \mathbb{T} $-gain graph), $ \Phi=(G, \varphi) $ is a graph where the function $ \varphi $ assigns a unit complex number to each orientation of an edge of $ G $, and its inverse is assigned to the opposite orientation. In this article, we study several spectral properties of distance Laplacian matrices of $\mathbb{T}$-gain graphs. In particular, we establish a characterization for the balanced $ \mathbb{T}$-gain graph in terms of the nullity of gain distance Laplacian matrices. As an example, it is shown that two switching equivalent $ \mathbb{T} $-gain graphs need not imply that their distance Laplacian spectra are the same. However, we provide a necessary condition for which two switching equivalent $ \mathbb{T} $-gain graphs have the same distance Laplacian spectra. Furthermore, we present a lower bound for spectral radii of gain distance Laplacian matrices in terms of the winner index. In addition, we establish some upper bounds for spectral radii of gain distance Laplacian matrices and characterize the equalities.   
\end{abstract}

{\bf Mathematics Subject Classification(2010):} 05C22(primary); 05C50, 05C35(secondary).

\textbf{Keywords.} Complex unit gain graph, Signed distance matrix, Gain distance matrix, Adjacency matrix, Hadamard product of matrices.
%==========================================================
\section{Introduction}
Throughout the paper, we consider $ G $ as a undirected connected simple graph with vertex set $ V(G)=\{ v_1, v_2, \dots, v_n\} $ and edge set $ E(G) $. If two vertices $ v_i $ and $ v_j $ are connected by an edge, then we write $ v_i\sim v_j $. If $ v_i\sim v_j $, then the  edge between them is denoted by $ e_{i,j} $. The \emph{distance} between two vertices $ v_i $ and $ v_j $ in $ G $ is denoted by $ d_G(v_i,v_j) $ ( or simply $ d(v_i,v_j) $) and is defined as the length of a shortest path between $ v_i $ and $ v_j $. The \emph{distance matrix} of $ G $ is an $ (n \times n) $ symmetric matrix $ \mathcal{D}(G) $ with $ (i,j) $th entry is $ d(v_i,v_j) $. The \emph{transmission} of a vertex $ v_i $ is defined by $ \Tr(v_i):=\sum\limits_{j=1}^{n}d(v_i,v_j) $. The \emph{transmission} matrix of $ G $ is a diagonal matrix $ \Tr(G):=\diag(\Tr(v_1), \Tr(v_2), \cdots, \Tr(v_n)) $. Then the \emph{distance Laplacian} and \emph{ distance signless Laplacian matrix} of $ G $ are $ \mathcal{DL}(G):=\Tr(G)-\mathcal{D}(G) $ and $ \mathcal{DQ}(G):=\Tr(G)+\mathcal{D}(G) $, respectively.  

In 2013, M Aouchiche et.al.\cite{Aouchiche_dis_Lap_1} introduce the notion of Laplacian and signless Laplacian for distance matrices of a simple graph $ G $ and discussed their spectral properties. Afterwards much progress has been made about distance Laplacian and distance signless Laplacian matrices for a simple graph $ G $, see \cite{Milan_dis_Lap_2,Huiqiu_dis_Lap_3,Aouchiche_dis_Lap_4,Das_dis_Lap_7,Das_dis_Lap_9,Fernandes_dis_Lap_5,Lin_dis_Lap_8} and references therein. A signed graph on an underlying graph $ G $ is a graph such that the sign of each edge of $ G $ is either $ 1 $ or $ -1 $ and it is denoted by $ \Sigma:=(G, \sigma) $, where $ \sigma:E(G)\rightarrow \{ -1, +1\} $. In 2020, Roshni et. al. \cite{hameed} presented a concept of two types of distance Laplacian matrices of signed graphs.

An oriented edge from the vertex $ v_i $ to $ v_j $ is denoted by $ \vec{e}_{i,j} $. Each undirected edge $ e_{i,j} $ determines a pair of oriented edges $ \vec{e}_{i,j} $ and $ \vec{e}_{j,i} $. An oriented edge set of  $ G $ is $ \vec{E}(G):=\{ \vec{e}_{i,j}, \vec{e}_{j,i}: e_{i,j}\in E(G)\} $. Let $ \mathbb{T}:=\{ z\in \mathbb{C}: |z|=1\} $. A \emph{complex unit gain graph} ( or $ \mathbb{T} $-gain graph) on an underlying graph $ G $ is denoted by $ \Phi=(G, \varphi) $, where $ \varphi:\vec{E}(G)\rightarrow \mathbb{T} $ is a mapping such that $\varphi(\vec{e}_{j,i})=\overline{\varphi(\vec{e}_{i,j})} $, for all $ e_{i,j}\in E(G) $. The \emph{adjacency matrix} of  a $ \mathbb{T} $-gain graph $ \Phi $ is a Hermitian matrix $ A(\Phi) $ and its $ (i,j) $th entry is defined by $ \varphi(\vec{e}_{i,j}) $, if $ v_i \sim v_j $ and zero otherwise. For more properties about Complex unit gain graphs and gain graphs, see \cite{Zaslav,reff1, Our-paper-1,A_alpha} and references therein.

An \emph{oriented path} in a $ \mathbb{T} $-gain graph $ \Phi $ from the vertex $ v_i $ to $ v_j $ is denoted by $ v_iPv_j $ and the gain of the path, $ \varphi(v_iPv_j) $ is the product of the gain of the oriented edges of the path. In fact, $ \varphi(v_jPv_i)=\overline{\varphi(v_iPv_j)}$. Let $ \vec{C}:v_1\rightarrow v_2 \rightarrow \cdots \rightarrow v_t \rightarrow v_1 $ be an oriented cycle. Then $ \varphi(\vec{C}) $ is the product of the consecutive oriented edges in $ \vec{C} $. If $ \vec{C}^* $ is the opposite oriented cycle of $ \vec{C} $, then   $ \overline{\varphi(\vec{C})}=\varphi(\vec{C}^*) $. If $ \varphi(\vec{C})=1$, then the cycle $ C $ is known as \emph{neutral} in $ \Phi $. A $ \mathbb{T} $-gain graph $ \Phi $ is called balanced if its all cycles are neutral.

Since some special graphs such as simple graph, oriented graph, signed graph and mixed graphs etc. can be obtained from a $ \mathbb{T} $-gain graph $ \Phi=(G, \varphi) $ by taking some specific $ \varphi $, so distance Laplacian matrices of a $ \mathbb{T} $-gain graph is also worth studying.

In \cite{gain_distance}, authors introduced gain distance matrices and study their properties. Later in \cite{GDL}, authors defined gain distance Laplacian matrices canonically. In this article, we study some spectral properties of two gain distance Laplacian matrices $ \mathcal{DL}^{\max}_{<}(\Phi) $ and $ \mathcal{DL}^{\min}_{<}(\Phi) $ of a $ \mathbb{T} $-gain graph $ \Phi=(G, \varphi) $ associated with a vertex ordering $ < $ on $ V(G) $. These concepts generalises the concepts of distance Laplacian matrices of simple graphs and signed distance Laplacian matrices of a signed graphs. A $ \mathbb{T} $-gain graph $ \Phi $ is distance compatible if and only if $ \mathcal{DL}^{\max}_{<}(\Phi)=\mathcal{DL}^{\min}_{<}(\Phi)$ with respect to any vertex ordering $ < $. We present that $ \Phi=(G, \varphi) $ is balanced if and only if $ \Phi $ is distance compatible and the distance Laplacian matrices of $ \Phi $ and $ G $ are co-spectral (Theorem \ref{th3.4}). Next, we show that the nullity of any gain distance Laplacian matrices is either $ 1 $ or $0$ and characterize each cases in Theorem \ref{th3.5}. The \emph{Winner index} of a graph $ G $ is $ W(G):=\sum\limits_{i>j} d(v_i,v_j)$. We obtain a lower bound of spectral radius of gain distance Laplacian matrices in terms of $ W(G) $ and $ n $ (Theorem \ref{th5.1}). Two $ \mathbb{T} $-gain graphs  $ \Phi=(G, \varphi) $ and $ \Psi=(G, \psi) $ are switching equivalent, write as $ \Phi \sim \Psi $ if there is a switching function $ \zeta: V(G)\rightarrow \mathbb{T} $ such that $ \varphi(\vec{e}_{i,j})=\zeta(v_i)^{-1}\psi(\vec{e}_{i,j}) \zeta(v_j)$ for all $ e_{i,j}\in E(G) $. We illustrate an example to show that if $ \Phi \sim \Psi $ need not imply that the spectrum of their gain distance Laplacian are the same (Example \ref{ex4.1}). However, if $ \Phi $ is distance compatible, then gain distance Laplacian matrices of $ \Phi $ and $ \Psi $ are co-spectral. Finally, we present two upper bound of spectral radius of gain distance Laplacian matrices and  characterize the equality (Theorem \ref{th4.9}, Theorem \ref{th5.4}).
%===================================================	
\section{Definitions, notation and preliminary results}\label{prelim}

Let $ \Phi=(G, \varphi) $ be a connected $ \mathbb{T} $-gain graph on $ G $.  For  $ s,t\in V(G) $, $sPt$ denotes an oriented path from the  vertex $s$ to the vertex $t$.  Define three sets of paths $ \mathcal{P}(s,t), \mathcal{P}^{\min}(s,t) $ and $ \mathcal{P}^{\max}(s,t) $ as follows:
$$ \mathcal{P}(s,t)=\left\{ sPt: sPt \text{ is a shortest path} \right\};$$
$$ \mathcal{P}^{\min}(s,t)=\left\{ sPt \in \mathcal{P}(s,t):   \Rea(\varphi(sPt))=\min\limits_{s\tilde{P}t \in  \mathcal{P}(s,t)}\Rea(\varphi(s\tilde{P}t)) \right\}; $$
$$ \mathcal{P}^{\max}(s,t)=\left\{ sPt \in \mathcal{P}(s,t):   \Rea(\varphi(sPt))=\max\limits_{s\tilde{P}t \in  \mathcal{P}(s,t)}\Rea(\varphi(s\tilde{P}t)) \right\}.$$

We denote $ \left(V(G), <\right)$ as an ordered vertex set, where `$<$' is a total ordering of the vertices of $ G $. An ordering `$<_r$' is  the \emph{reverse ordering} of `$<$' if $ v_i<_r v_j $ if and only if $ v_j<v_i $, for any $ i,j $. An ordering `$<$' is  the \emph{standard vertex ordering} if  $ v_1<v_2<\dots <v_n $.

Let $ \Phi=(G, \varphi) $ be a connected $ \mathbb{T} $-gain graph with an ordered vertex set $ (V(G), <) $.  The \textit{maximum auxiliary gain function} with respect to the vertex ordering $ < $ is the map $ \varphi_{\max}^{<}: V(G)\times V(G) \rightarrow \mathbb{T} $ defined by the following properties:  $ \varphi_{\max}^{<}(s,s)=0 $ for all $ s\in V(G) $;
 whenever $ s<t $, $ \varphi_{\max}^{<}(s,t)=\varphi(sPt) $, where $ sPt $ is the oriented path from $ s $ to $ t $ such that $ sPt \in \mathcal{P}^{\max}(s,t) $ with $ \Ima(\varphi(sPt))=\max\limits_{s\tilde{P}t \in \mathcal{P}^{\max}(s,t)}\Ima(\varphi(s\tilde{P}t)).$ Moreover, $ \varphi_{\max}^{<}(t,s)=\overline{ \varphi_{\max}^{<}(s,t)}$.
		 The \textit{minimum auxiliary gain function} with respect to the vertex ordering $ < $ is the map $ \varphi_{\min}^{<}: V(G)\times V(G) \rightarrow \mathbb{T} $ defined by the following properties:	
			 $ \varphi_{\min}^{<}(s,s)=0 $ for all $ s\in V(G) $;
			 whenever $ s<t $, $ \varphi_{\min}^{<}(s,t)=\varphi(sPt) $, where $ sPt $ is the oriented path from $ s $ to $ t $ such that $ sPt \in \mathcal{P}^{\min}(s,t) $ with $ \Ima(\varphi(sPt))=\min\limits_{s\tilde{P}t \in \mathcal{P}^{\min}(s,t)}\Ima(\varphi(s\tilde{P}t)).$ Moreover, $ \varphi_{\min}^{<}(t,s)=\overline{ \varphi_{\min}^{<}(s,t)}$. Note that,  for $s< t$,  $ \varphi^{<}_{\max}(s, t)  (\mbox{~resp.,~} \varphi^{<}_{\min}(s, t)  )$ is the maximum (resp., minimum) gain of the shortest paths from $s$ to $t$ with respect to the lexicographic ordering of complex numbers ( $ a+ib\leq c+id $ with respect to the lexicographic ordering  if  $ a<c $ or $ a=c $ and $ b\leq d $).  That is,  
\begin{enumerate}
	\item[(1)] If $ s<t $, then $\varphi_{\max}^{<}(s,t)=\max\limits_{sPt\in \mathcal{P}(s,t)} \varphi(sPt)  $. Further $ \varphi_{\max}^{<}(t,s)=\overline{ \varphi_{\max}^{<}(s,t)}$.
	\item[(2)] If  $ s<t $, then  $\varphi_{\min}^{<}(s,t)=\min\limits_{sPt\in \mathcal{P}(s,t)} \varphi(sPt)  $. Further $ \varphi_{\min}^{<}(t,s)=\overline{ \varphi_{\min}^{<}(s,t)}$.
\end{enumerate}
Here the maximum and minimum are taken with respect to the lexicographic ordering of complex numbers.
For any two vertices $ s,t\in V(G)$, there are two gain distances from the vertex $ s $ to the vertex $ t $  which are defined as follows:	 $d^{<}_{\max}(s,t)=\varphi^{<}_{\max}(s,t)d(s,t)$ and  	$ d^{<}_{\min}(s,t)=\varphi^{<}_{\min}(s,t)d(s,t).$
 The \textit{gain distance matrices} $ \mathcal{D}_{<}^{\max}(\Phi) $ and $ \mathcal{D}_{<}^{\min}(\Phi) $ associated with $ < $ are defined as follows:	
			$ \mathcal{D}_{<}^{\max}(\Phi)= \left( d^{<}_{\max}(v_i, v_j)\right)$,
			 $ \mathcal{D}_{<}^{\min}(\Phi)= \left( d^{<}_{\min}(v_i, v_j) \right)$.
	 Here $d^{<}_{\max}(v_i, v_j)$ and $d^{<}_{\min}(v_i, v_j)$ are the $(i, j)$th entry of $ \mathcal{D}_{<}^{\max}(\Phi)$ and $ \mathcal{D}_{<}^{\min}(\Phi)$, respectively.
A $ \mathbb{T} $-gain graph $ \Phi=(G, \varphi) $ is  vertex ordering independent (simply, ordering independent), if $ \mathcal{D}^{\max}_{<}(\Phi)=\mathcal{D}^{\max}_{<_r}(\Phi)$ and $ \mathcal{D}^{\min}_{<}(\Phi)=\mathcal{D}^{\min}_{<_r}(\Phi) $, where $  < $ is the standard vertex ordering on $V(G)$. In this case, we define $\mathcal{D}^{\max}(\Phi):=\mathcal{D}^{\max}_{<}(\Phi)=\mathcal{D}^{\max}_{<_r}(\Phi)$ and $ \mathcal{D}^{\min}(\Phi):= \mathcal{D}^{\min}_{<}(\Phi)=\mathcal{D}^{\min}_{<_r}(\Phi)$.
A $ \mathbb{T} $-gain graph $ \Phi=(G, \varphi) $ is called gain distance compatible (simply, distance compatible) if $ \mathcal{D}^{\max}_{<}(\Phi)=\mathcal{D}^{\min}_{<}(\Phi) $, where $  < $ is the standard vertex ordering. In this case, we define $ \mathcal{D}(\Phi):=\mathcal{D}^{\max}_{<}(\Phi)=\mathcal{D}^{\min}_{<}(\Phi)$. 

\begin{lemma}[{\cite[Proposition 3.2]{gain_distance}}]\label{Th0.1}
	Let $ \Phi=(G, \varphi) $ be a distance compatible $ \mathbb{T} $-gain graph. If $ \Phi \sim \Psi $, then $ \Psi $ is distance compatible and $ \spec(D(\Phi))=\spec(D(\Psi)) .$
\end{lemma}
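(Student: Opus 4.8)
The plan is to reduce everything to a single elementary observation about how switching transforms the gain of a path, and then to exploit the rigid structure that distance compatibility imposes. First I would record the path-gain identity: if $\Phi \sim \Psi$ via a switching function $\zeta$, then for any oriented path $sPt = v_{i_0}\to v_{i_1}\to \cdots \to v_{i_k}$ with $s=v_{i_0}$ and $t=v_{i_k}$, substituting $\varphi(\vec{e}_{i_m,i_{m+1}})=\zeta(v_{i_m})^{-1}\psi(\vec{e}_{i_m,i_{m+1}})\zeta(v_{i_{m+1}})$ into the product $\varphi(sPt)=\prod_{m}\varphi(\vec{e}_{i_m,i_{m+1}})$ makes the interior $\zeta$-factors telescope, leaving
$$ \varphi(sPt)=\zeta(s)^{-1}\,\psi(sPt)\,\zeta(t). $$
The essential feature is that the correction factor $\zeta(s)^{-1}\zeta(t)$ depends only on the endpoints $s,t$ and not on the particular path $P$.

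Next I would unpack what distance compatibility says at the level of individual paths. Since $G$ is connected, $d(s,t)\neq 0$ for $s\neq t$, so $\mathcal{D}^{\max}_{<}(\Phi)=\mathcal{D}^{\min}_{<}(\Phi)$ is equivalent to $\varphi^{<}_{\max}(s,t)=\varphi^{<}_{\min}(s,t)$ for every pair $s,t$. Because $\varphi^{<}_{\max}(s,t)$ and $\varphi^{<}_{\min}(s,t)$ are the lexicographic maximum and minimum of the set $\{\varphi(sPt): sPt\in\mathcal{P}(s,t)\}$, their coincidence forces that set to be a singleton; that is, \emph{all shortest paths from $s$ to $t$ carry the same gain}, which I denote $g(s,t)$ (and which satisfies $g(s,t)=\overline{g(t,s)}$). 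Consequently $\mathcal{D}(\Phi)_{ij}=g(v_i,v_j)\,d(v_i,v_j)$.

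Combining the two, I would apply the path-gain identity to shortest paths. For any shortest path $sPt$ we get $\psi(sPt)=\zeta(s)\,\varphi(sPt)\,\overline{\zeta(t)}=\zeta(s)\,g(s,t)\,\overline{\zeta(t)}$, which is again independent of $P$. Hence every shortest path of $\Psi$ from $s$ to $t$ has the common gain $\zeta(s)g(s,t)\overline{\zeta(t)}$, so the lexicographic maximum and minimum coincide and $\Psi$ is distance compatible, with $\mathcal{D}(\Psi)_{ij}=\zeta(v_i)\,\overline{\zeta(v_j)}\,\mathcal{D}(\Phi)_{ij}$. Writing $D_{\zeta}=\diag(\zeta(v_1),\ldots,\zeta(v_n))$, a unitary diagonal matrix with $D_{\zeta}^{-1}=D_{\zeta}^{*}$, this reads
$$ \mathcal{D}(\Psi)=D_{\zeta}\,\mathcal{D}(\Phi)\,D_{\zeta}^{-1}, $$
a unitary similarity, and therefore $\spec(\mathcal{D}(\Phi))=\spec(\mathcal{D}(\Psi))$.

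The only genuinely delicate point is the step that collapses the set of shortest-path gains to a single value, for this is exactly what lets me bypass the lexicographic max/min machinery. Without the compatibility hypothesis, multiplying a set of unimodular numbers by the rotation $\zeta(s)^{-1}\zeta(t)$ need not preserve which element is lexicographically largest, so $\varphi^{<}_{\max}$ would fail to transform by a clean diagonal conjugation; this is precisely the obstruction that surfaces for general switching-equivalent gain graphs (compare Example \ref{ex4.1}). Once the gains are constant along all shortest paths, the rotation acts uniformly and the diagonal conjugation, hence the cospectrality, goes through without difficulty.
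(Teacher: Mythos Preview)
Your proof is correct and follows essentially the same approach that the paper uses. Although Lemma~\ref{Th0.1} is cited from \cite{gain_distance} rather than proved here, the paper reproduces the argument in the proof of Theorem~\ref{th4.8} (and alludes to it in the proof of Theorem~\ref{th3.4}): distance compatibility forces every shortest path between a fixed pair of vertices to carry the same gain, the switching relation $\psi(v_iPv_j)=\zeta(v_i)^{-1}\varphi(v_iPv_j)\zeta(v_j)$ then shows $\Psi$ inherits this property, and one concludes $\mathcal{D}(\Psi)=U^{*}\mathcal{D}(\Phi)U$ for the diagonal unitary $U=\diag(\zeta(v_1),\ldots,\zeta(v_n))$. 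Your write-up makes the telescoping of the path gain and the ``lex max $=$ lex min $\Rightarrow$ singleton'' step more explicit than the paper does, but the route is the same.
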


		Let $ \Phi=(G, \varphi) $ be a $ \mathbb{T} $-gain graph with a vertex ordering $ < $. The complete $ \mathbb{T} $-gain graph with respect to $\mathcal{D}^{\max}_{<}(\Phi) $, denoted by $ K^{\mathcal{D}^{\max}_{<}}(\Phi) $, is a complete $ \mathbb{T} $-gain graph of $ V(G) $ vertices with edge gain of $ \vec{e}_{i,j} $ is  $\varphi^{<}_{\max}(v_i,v_j)$ for all $v_i,v_j \in V(G)$. Similarly
		$ K^{\mathcal{D}^{\min}_{<}}(\Phi) $ is defined  using  $\mathcal{D}^{\min}_{<}(\Phi) $.

If the $ \mathbb{T} $-gain graph $ \Phi $ is ordering independent, then define $K^{\mathcal{D}^{\max}}(\Phi):= K^{\mathcal{D}^{\max}_{<}}(\Phi)=K^{\mathcal{D}^{\max}_{<_r}}(\Phi)  $ and $K^{\mathcal{D}^{\min}}(\Phi):= K^{\mathcal{D}^{\min}_{<}}(\Phi)=K^{\mathcal{D}^{\min}_{<_r}}(\Phi)  $. If $ \Phi $ is distance compatible then it is ordering independent, and   $ K^{\mathcal{D}^{\max}}(\Phi)=K^{\mathcal{D}^{\min}}(\Phi)  $. Then define $ K^{\mathcal{D}}(\Phi):= K^{\mathcal{D}^{\max}}(\Phi)=K^{\mathcal{D}^{\min}}(\Phi)$.

\begin{theorem}[{\cite[Theorem 5.2]{gain_distance}}]\label{Th1.1}
	Let $ \Phi=(G, \varphi) $ be a $ \mathbb{T} $-gain graph with vertex ordering $<$. Then the following statements are equivalent.
	\begin{enumerate}
		\item [(i)] $ \Phi $ is balanced.
		\item [(ii)] $ K^{\mathcal{D}^{\max}}(\Phi) $ is balanced.
		\item [(iii)] $ K^{\mathcal{D}^{\min}}(\Phi) $ is balanced.
		\item [(iv)] $ \mathcal{D}^{\max}(\Phi) =\mathcal{D}^{\min}(\Phi)$ and associated complete $ \mathbb{T} $-gain graph $ K^{\mathcal{D}}(\Phi) $ is balanced.
	\end{enumerate}
\end{theorem}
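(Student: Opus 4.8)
The plan is to prove the cyclic chain of implications $(i)\Rightarrow(iv)\Rightarrow(ii)\Rightarrow(i)$ together with $(iv)\Rightarrow(iii)\Rightarrow(i)$, which closes the equivalence. The engine of the whole argument is the standard switching characterization of balance: $\Phi$ is balanced if and only if it is switching equivalent to the trivial (all-ones) gain graph, i.e. there is a function $\zeta:V(G)\to\mathbb{T}$ with $\varphi(\vec{e}_{i,j})=\overline{\zeta(v_i)}\,\zeta(v_j)$ for every edge $e_{i,j}\in E(G)$.

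For $(i)\Rightarrow(iv)$, I would fix such a $\zeta$ and observe that the gain of any oriented path $sPt$ telescopes: writing $P:s=u_0\to u_1\to\cdots\to u_k=t$ gives $\varphi(sPt)=\prod_{\ell}\overline{\zeta(u_{\ell-1})}\,\zeta(u_\ell)=\overline{\zeta(s)}\,\zeta(t)$, which depends only on the endpoints. In particular all shortest $s$–$t$ paths share the gain $\overline{\zeta(s)}\,\zeta(t)$, so $\varphi^{<}_{\max}(s,t)=\varphi^{<}_{\min}(s,t)=\overline{\zeta(s)}\,\zeta(t)$ for every ordering. Hence $\mathcal{D}^{\max}_{<}(\Phi)=\mathcal{D}^{\min}_{<}(\Phi)$ for all $<$; in particular $\Phi$ is distance compatible (so ordering independent) and $\mathcal{D}^{\max}(\Phi)=\mathcal{D}^{\min}(\Phi)=\mathcal{D}(\Phi)$. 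The edge gains of $K^{\mathcal{D}}(\Phi)$ are exactly $\overline{\zeta(v_i)}\,\zeta(v_j)$, so $K^{\mathcal{D}}(\Phi)$ is the $\zeta$-switching of the complete graph with all gains $1$; since the latter is balanced and switching preserves balance, $K^{\mathcal{D}}(\Phi)$ is balanced, giving $(iv)$.

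The implications $(iv)\Rightarrow(ii)$ and $(iv)\Rightarrow(iii)$ are immediate: the equality $\mathcal{D}^{\max}(\Phi)=\mathcal{D}^{\min}(\Phi)$ forces $K^{\mathcal{D}^{\max}}(\Phi)=K^{\mathcal{D}^{\min}}(\Phi)=K^{\mathcal{D}}(\Phi)$, which is assumed balanced. For the returns $(ii)\Rightarrow(i)$ and $(iii)\Rightarrow(i)$, the key observation is that $\Phi$ sits inside the associated complete gain graph as a spanning subgraph with matching gains: whenever $d(v_i,v_j)=1$ the only shortest path is the edge $e_{i,j}$ itself, so $\varphi^{<}_{\max}(v_i,v_j)=\varphi^{<}_{\min}(v_i,v_j)=\varphi(\vec{e}_{i,j})$. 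Consequently every cycle of $\Phi$ is a cycle of $K^{\mathcal{D}^{\max}_{<}}(\Phi)$ (resp. $K^{\mathcal{D}^{\min}_{<}}(\Phi)$) carrying the same gain; if the complete gain graph is balanced then all its cycles are neutral, so in particular all cycles of $\Phi$ are neutral and $\Phi$ is balanced.

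I expect the only delicate point to be bookkeeping around ordering independence: statements $(ii)$–$(iv)$ are phrased with the ordering-free symbols $K^{\mathcal{D}^{\max}}(\Phi)$ etc., which are defined only once $\Phi$ is known to be ordering independent. This is harmless in the forward direction, where $(i)$ supplies distance compatibility and hence ordering independence before those symbols are invoked, and in the backward direction the subgraph argument goes through verbatim for the fixed ordering $<$. The genuine mathematical content is concentrated in the telescoping computation of path gains under a potential $\zeta$ and in the elementary but crucial remark that $\Phi$ is a gain-preserving spanning subgraph of both complete gain graphs.
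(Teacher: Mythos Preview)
The paper does not actually prove this theorem: it is quoted verbatim from \cite[Theorem~5.2]{gain_distance} in the preliminaries section, with no accompanying argument. There is therefore no ``paper's own proof'' to compare your attempt against.

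Your proof is correct and self-contained. The two essential ingredients --- the switching/potential characterization of balance, which makes every path gain telescope to $\overline{\zeta(s)}\,\zeta(t)$ and hence forces distance compatibility, and the observation that $\Phi$ sits as a gain-preserving spanning subgraph inside both $K^{\mathcal{D}^{\max}_{<}}(\Phi)$ and $K^{\mathcal{D}^{\min}_{<}}(\Phi)$ (because adjacent vertices have a unique shortest path) --- are exactly the natural tools, and your cyclic scheme $(i)\Rightarrow(iv)\Rightarrow(ii),(iii)\Rightarrow(i)$ is clean. Your remark about the bookkeeping around ordering independence is also well placed: the ordering-free symbols in (ii)--(iv) presuppose that $\Phi$ is ordering independent, which is implicitly part of the hypothesis in those items, and your subgraph argument for the return implications indeed works for any fixed ordering.
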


Let $ \mathbb{C}^{m\times n} $ denote the set of all $ m\times n $ matrices with complex entries. For $ A=(a_{ij})\in \mathbb{C}^{n \times n} $, define $ |A|=(|a_{ij}|) $.

\begin{theorem}[{\cite[Theorem 8.1.18]{horn-john2}}]\label{th3}
	Let $ M,N \in \mathbb{C}^{n \times n} $ be two matrices. If $ M \geq |N| $, then $ \rho(M)\geq \rho(|N|) \geq \rho(N) $.	
\end{theorem}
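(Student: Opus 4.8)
The plan is to reduce the two claimed inequalities $\rho(M) \geq \rho(|N|)$ and $\rho(|N|) \geq \rho(N)$ to a single mechanism: entrywise monotonicity of matrix powers combined with Gelfand's formula $\rho(A) = \lim_{k\to\infty} \|A^k\|^{1/k}$, evaluated in a norm that respects the entrywise order. First I would record two elementary facts about the entrywise modulus and the entrywise order. For any $A, B \in \mathbb{C}^{n\times n}$ the triangle inequality applied to each entry of a product gives $|AB| \leq |A|\,|B|$ entrywise; and if $0 \leq A \leq C$ and $0 \leq B \leq D$ entrywise (all four matrices real and nonnegative), then $AB \leq CD$ entrywise, since every entry of each product is a sum of products of nonnegative numbers compared term by term.

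Applying these facts inductively, I would obtain the chain $|N^k| \leq |N|^k \leq M^k$ entrywise for every $k \geq 1$: the first inequality is the iterated triangle inequality $|N^k| \leq |N|^k$, and the second follows from $|N| \leq M$ (which holds by hypothesis, $|N|$ and $M$ being nonnegative) together with the monotone-multiplication fact. This is the heart of the argument, and everything else is a matter of passing to norms.

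Next I would fix a norm on $\mathbb{C}^{n\times n}$ that is both absolute and monotone, for instance $\|A\| = \sum_{i,j} |a_{ij}|$; such a norm is submultiplicative and satisfies $\|A\| = \| |A| \|$ together with $\|A\| \leq \|B\|$ whenever $|A| \leq |B|$ entrywise. Combining this with the chain above gives $\|N^k\| = \| |N^k| \| \leq \| |N|^k \| \leq \|M^k\|$ for all $k$. Taking $k$-th roots and letting $k \to \infty$, Gelfand's formula yields $\rho(N) \leq \rho(|N|) \leq \rho(M)$, which is exactly the assertion. Alternatively, I could run a Perron--Frobenius argument: if $Nx = \lambda x$ with $|\lambda| = \rho(N)$, then $\rho(N)\,|x| = |Nx| \leq |N|\,|x|$, and a Collatz--Wielandt type inequality for nonnegative matrices gives $\rho(|N|) \geq \rho(N)$; applying the same reasoning to the Perron vector of $|N|$ against $M$ gives $\rho(M) \geq \rho(|N|)$.

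The main obstacle I anticipate is not any single computation but justifying the two structural inputs cleanly: that the chosen norm is simultaneously absolute and monotone for the entrywise order, and that Gelfand's limit may be compared term by term (which is immediate once the norm inequalities hold for every $k$). In the Perron--Frobenius alternative, the delicate point would instead be the Collatz--Wielandt inequality guaranteeing $\rho(A) \geq \mu$ whenever a nonnegative matrix $A$ admits a nonnegative vector $y \neq 0$ with $Ay \geq \mu y$; I would prefer the Gelfand route precisely because it sidesteps any irreducibility hypotheses and needs only elementary norm monotonicity.
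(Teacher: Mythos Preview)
Your argument is correct. The Gelfand-formula route is clean: the entrywise chain $|N^k|\le |N|^k\le M^k$ is established exactly as you describe, the norm $\|A\|=\sum_{i,j}|a_{ij}|$ is indeed submultiplicative, absolute, and monotone for the entrywise order, and passing to $k$-th roots and the limit yields $\rho(N)\le\rho(|N|)\le\rho(M)$ without any irreducibility assumption. The Perron--Frobenius alternative you sketch also works, with the caveat you already flag about Collatz--Wielandt in the reducible case.

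As for comparison with the paper: there is nothing to compare against. Theorem~\ref{th3} is stated in the preliminaries with a citation to Horn--Johnson and is not proved in the paper itself; it is used only as an imported tool (and in fact the paper's main upper-bound arguments rely on the sharper Theorem~\ref{th4} rather than on this statement). Your write-up is essentially the standard textbook proof, and either of your two routes would be acceptable as a self-contained justification if one were needed.
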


\begin{theorem}[{\cite[Theorem 8.4.5]{horn-john2}}]\label{th2}
	Let $ M,N \in \mathbb{C}^{n \times n}$. Suppose $  M$ is irreducible and non-negative and $ M \geq |N| $. Let $ \mu=e^{i\theta} \rho(N)$ be a given maximum modulus eigenvalue of $ N $. If $ \rho(M)=\rho(N) $, then there is a unitary diagonal matrix $ D $ such that $ N=e^{i\theta}DMD^{-1} $.
\end{theorem}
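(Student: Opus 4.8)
The plan is to adapt Wielandt's classical argument, which ties the equality $\rho(M)=\rho(N)$ to a rigidity (phase-alignment) statement about an eigenvector of $N$. First I would invoke the Perron--Frobenius theorem for the irreducible nonnegative matrix $M$: there exist a positive right eigenvector $x>0$ and a positive left eigenvector $w>0$ with $Mx=\rho(M)x$ and $w^{T}M=\rho(M)w^{T}$, and $\rho(M)$ is a \emph{simple} eigenvalue of $M$. Write $\rho:=\rho(M)=\rho(N)$, and let $y\neq 0$ be an eigenvector for the given maximum-modulus eigenvalue $\mu=e^{i\theta}\rho$, so that $Ny=\mu y$.

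Next I would pass to entrywise absolute values. Taking moduli in $Ny=\mu y$ and combining the triangle inequality with $M\geq|N|$ gives
$$ \rho\,|y_k| = |\mu y_k| = \Bigl|\sum_j n_{kj}y_j\Bigr| \leq \sum_j |n_{kj}|\,|y_j| \leq \sum_j m_{kj}|y_j| = (M|y|)_k, $$
that is, $M|y|\geq \rho|y|$ entrywise. Pairing this with the positive left eigenvector $w$ yields $w^{T}(M|y|-\rho|y|)=w^{T}M|y|-\rho w^{T}|y|=0$; since $w>0$ and $M|y|-\rho|y|\geq 0$, this forces $M|y|=\rho|y|$. By simplicity of the Perron root, $|y|$ must be a positive multiple of $x$, so in particular $|y|>0$; this positivity is what later legitimizes dividing by the $|y_j|$.

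The key step is then to read off the two chains of equalities that must now hold throughout the display. Equality in $\sum_j m_{kj}|y_j|=\sum_j |n_{kj}|\,|y_j|$, given $m_{kj}\geq |n_{kj}|$ and every $|y_j|>0$, forces $|N|=M$ entrywise. Equality in the triangle inequality $\bigl|\sum_j n_{kj}y_j\bigr|=\sum_j |n_{kj}|\,|y_j|$ forces every nonzero term $n_{kj}y_j$ to share the common argument of $\mu y_k$. Writing $y_j=|y_j|e^{i\phi_j}$ and $D=\diag(e^{i\phi_1},\dots,e^{i\phi_n})$, and noting $\mu y_k=\rho|y_k|e^{i(\theta+\phi_k)}$, the phase-alignment condition becomes $\arg n_{kj}=\theta+\phi_k-\phi_j$ whenever $n_{kj}\neq 0$. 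Combined with $|n_{kj}|=m_{kj}$, this gives $n_{kj}=e^{i\theta}e^{i\phi_k}m_{kj}e^{-i\phi_j}$ for all $k,j$, the vanishing entries being handled trivially since $m_{kj}=|n_{kj}|=0$ there. In matrix form this is precisely $N=e^{i\theta}DMD^{-1}$ with $D$ unitary diagonal.

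I expect the main obstacle to lie in the bookkeeping of this last step: the triangle-inequality equality only constrains phases on the support of each row of $N$, so I would need to check carefully that the off-support entries cause no trouble (they do not, as $m_{kj}=0$ forces agreement trivially) and that the \emph{common} phase is correctly pinned down as $\theta+\phi_k$ rather than merely ``some phase.'' The other delicate point is the strict positivity $|y|>0$ rather than $|y|\geq 0$, which I would isolate via simplicity of the Perron eigenvalue before extracting $|N|=M$, since the entrywise comparison argument breaks down at any index where $|y_j|$ could vanish.
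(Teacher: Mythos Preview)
Your argument is correct and is exactly Wielandt's classical proof (the one Horn--Johnson give for their Theorem~8.4.5). Note, however, that the paper does not supply its own proof of this statement: it is quoted as a preliminary result from \cite{horn-john2}, so there is nothing in the paper to compare your approach against.
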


\begin{lemma}[{\cite[Lemma 2.1]{Ellingham}}]\label{lm2.1}
	Let $ A $ be an $ n \times n $ real symmetric matrix. If $ \mu $ is an eigenvalue of $ A $ associated with an eigenvector $ x $ whose all entries are non negative, then
	$$  \min_{1\leq j \leq n}S_j(A)\leq \mu \leq \max_{1\leq j \leq n}S_j(A),$$ 
	where $ S_j(A) $ is the sum of the entries of $ j$th row of $ A $.
\end{lemma}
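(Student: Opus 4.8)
The plan is to express $\mu$ as a weighted average of the row sums $S_1(A), \dots, S_n(A)$, with weights given by the nonnegative entries of the eigenvector $x$, and then to invoke the elementary fact that a weighted average of real numbers lies between the smallest and largest of those numbers. The symmetry of $A$ is exactly what makes the row sums appear.

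First I would introduce the all-ones vector $\mathbf{1} = (1, \dots, 1)^T$ and observe that the $j$-th coordinate of $A\mathbf{1}$ equals $S_j(A)$, the sum of the entries of the $j$-th row of $A$. Next I would compute the scalar $\mathbf{1}^T A x$ in two ways. On the one hand, using the eigenvalue equation $Ax = \mu x$, we get $\mathbf{1}^T A x = \mu\, \mathbf{1}^T x = \mu \sum_{j} x_j$. On the other hand, using that $A$ is symmetric (so $A^T = A$), we have $\mathbf{1}^T A x = (A\mathbf{1})^T x = \sum_{j} S_j(A)\, x_j$. Equating the two expressions yields $\mu \sum_j x_j = \sum_j S_j(A)\, x_j$.

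Since $x$ is an eigenvector it is nonzero, and by hypothesis all its entries are nonnegative, so $\sum_j x_j > 0$. Dividing through gives
$$\mu = \frac{\sum_{j} S_j(A)\, x_j}{\sum_{j} x_j},$$
which exhibits $\mu$ as a convex combination of the row sums $S_j(A)$ with nonnegative weights $x_j / \sum_k x_k$ summing to $1$. Any such weighted average is bounded below by $\min_{1\le j\le n} S_j(A)$ and above by $\max_{1\le j\le n} S_j(A)$, which is precisely the claimed inequality.

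The only point requiring care is the strict positivity of $\sum_j x_j$, which I would justify by noting that a nonzero vector with nonnegative coordinates necessarily has strictly positive coordinate sum; this is what licenses the division. Beyond that, no genuine obstacle is expected, since the argument is entirely linear-algebraic and relies only on the symmetry of $A$ to move its action from $x$ onto $\mathbf{1}$.
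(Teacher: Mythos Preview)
Your argument is correct and is the standard proof of this fact. Note, however, that the paper does not supply its own proof of this lemma: it is quoted as a known result from Ellingham and Zha \cite{Ellingham} and used later without further justification. So there is no ``paper's proof'' against which to compare yours; your weighted-average argument via $\mathbf{1}^T A x$ is exactly the elementary derivation one would expect, and the care you take with $\sum_j x_j>0$ is appropriate.
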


Let $ M $ be a square matrix. Then $ \spec(M) $ denotes the collection of all eigenvalues of $ M $ with the corresponding multiplicities. The spectral radius of $ M $ is defined by $ \rho(M):=\max\limits_{\lambda \in \spec (M)}|\lambda|$. 

\begin{theorem} [{\cite[Theorem 2.14]{Berman}}]\label{th4}
	Let $ M$ and $N $ be  square matrices. If $ M \geq |N| $ and $ M $ is irreducible, then
	\begin{enumerate}
		\item [(1)] For every eigenvalue $ \lambda $ of $ N $, $ \rho(M)\geq |\lambda| $.
		\item[(2)] Equality holds in (1) if and only if $ N=e^{i\theta} UMU^*$, where $ e^{i\theta}=\frac{\lambda}{\rho(M)}$ and $ |U|=I $.
	\end{enumerate} 
\end{theorem}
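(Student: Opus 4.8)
The plan is to obtain this statement as a direct corollary of Theorem~\ref{th3} and Theorem~\ref{th2}, which between them supply exactly the two halves I need.

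For part~(1) I would argue as follows. Fix any eigenvalue $\lambda$ of $N$. By the definition of spectral radius, $\rho(N)=\max_{\mu\in\spec(N)}|\mu|\ge|\lambda|$, while $M\ge|N|$ lets me invoke Theorem~\ref{th3} to get $\rho(M)\ge\rho(N)$. Chaining these gives $\rho(M)\ge\rho(N)\ge|\lambda|$; I note that irreducibility plays no role in this half.

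For the forward direction of part~(2), I would start from the assumption $|\lambda|=\rho(M)$ and combine it with part~(1) to force $\rho(M)=\rho(N)=|\lambda|$. Thus $\lambda$ is a maximum-modulus eigenvalue of $N$, and writing $\lambda=e^{i\theta}\rho(N)$ pins down $e^{i\theta}=\lambda/\rho(N)=\lambda/\rho(M)$. Now the hypotheses of Theorem~\ref{th2} ($M$ irreducible and non-negative, $M\ge|N|$, and $\rho(M)=\rho(N)$) are all in place, so it yields a unitary diagonal matrix $D$ with $N=e^{i\theta}DMD^{-1}$. Setting $U=D$ and using that a diagonal unitary matrix satisfies $|U|=I$ and $U^{-1}=U^*$ converts this to $N=e^{i\theta}UMU^*$, which is the claimed form. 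For the converse I would run this backwards: from $N=e^{i\theta}UMU^*$ with $|U|=I$, the matrix $U$ is diagonal unitary, so $N$ is similar to $e^{i\theta}M$ and $\spec(N)=\{e^{i\theta}\mu:\mu\in\spec(M)\}$; since $M$ is non-negative, Perron--Frobenius gives $\rho(M)\in\spec(M)$, so $\lambda=e^{i\theta}\rho(M)$ is an eigenvalue of $N$ attaining $|\lambda|=\rho(M)$, i.e.\ equality in~(1).

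I expect the only real work to be bookkeeping in the forward implication of~(2): matching the scalar $e^{i\theta}$ and checking the hypotheses of Theorem~\ref{th2}, which does the heavy lifting. Should one instead want a self-contained proof not leaning on Theorem~\ref{th2}, the genuine obstacle would be the equality analysis: take $x$ with $Nx=\lambda x$, pass to the entrywise bound $|\lambda|\,|x|\le M|x|$, use the left Perron eigenvector of the irreducible $M$ to upgrade this to $M|x|=\rho(M)|x|$ with $|x|>0$, and then extract the phase matrix $U=\diag\!\left(x_j/|x_j|\right)$ from the equality case of the triangle inequality $\bigl|\sum_j N_{ij}x_j\bigr|=\sum_j|N_{ij}|\,|x_j|$.
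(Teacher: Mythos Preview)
The paper does not prove this statement at all: it is quoted verbatim as a preliminary result from Berman and Plemmons (the label itself reads ``\cite[Theorem 2.14]{Berman}''), with no accompanying argument. So there is no ``paper's own proof'' to compare your proposal against.

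That said, your derivation is correct and is the natural way to see Theorem~\ref{th4} as an immediate consequence of the two Horn--Johnson results the paper already records. Part~(1) is exactly Theorem~\ref{th3} plus the trivial bound $\rho(N)\ge|\lambda|$; for part~(2) you correctly note that $M\ge|N|\ge 0$ supplies the non-negativity hypothesis that Theorem~\ref{th2} needs (Theorem~\ref{th4} as stated only assumes irreducibility), and your bookkeeping with $U=D$, $U^{-1}=U^*$, and the identification $e^{i\theta}=\lambda/\rho(M)$ is sound. The converse is essentially tautological once $|e^{i\theta}|=1$ forces $|\lambda|=\rho(M)$, and your similarity argument confirms that such a $\lambda$ is indeed an eigenvalue of $N$. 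The self-contained sketch you append at the end (Perron vector plus equality in the triangle inequality) is the standard textbook route and would also work.
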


%=========================================================
\section{Nullity of gain distance Laplacian matrices}
We begin this section with the concepts of distance Laplacian matrices of complex unit gain graph, or simply, gain distance Laplacian matrices. A gain distance Laplacian matrix can be identified with a Laplacian matrix of a positively weighted $ \mathbb{T} $-gain graph. In this section, we characterize a balanced $ \mathbb{T} $-gain graph in terms of gain distance Laplacian spectrum. Then we obtain that the nullity of gain distance Laplacian matrices is either $ 1 $ or $0$. We show that a $ \mathbb{T} $-gain graph $ \Phi $ is balanced if and only if the nullity of its distance Laplacian matrix is $ 0 $.

Let $ \Phi=(G, \varphi) $ be a connected $ \mathbb{T} $-gain graph with an ordered vertex set $ (V(G), <) $, where $ V(G)=\{ v_1, v_2, \dots, v_n \}$ is the vertex set of $ G $. For a vertex $ v_s\in V(G) $, the \emph{transmission} of $ v_s $, denoted by $ \Tr(v_s) $, is defined as $ \Tr(v_s):=\sum\limits_{v_t\in V(G)}d(v_s,v_t) $.  The \emph{transmission matrix} of a graph $ G $ is a diagonal matrix defined as  $ \Tr(G):= \diag(\Tr(v_1), \Tr(v_2), \dots, \Tr(v_n)) $.  Then \emph{the distance Laplacian matrices of complex unit gain graph $\Phi$ } or simply, \textit{the gain distance Laplacian matrices} of $ \Phi $ associated with the ordering $ < $ are defined as follows: 
\begin{itemize}
\item[(1)] $ \mathcal{DL}^{\max}_{<}(\Phi):=\Tr(G)-\mathcal{D}^{\max}_{<}(\Phi) $.
\item[(2)] $ \mathcal{DL}^{\min}_{<}(\Phi):=\Tr(G)-\mathcal{D}^{\min}_{<}(\Phi) $.
\end{itemize}

If a $\mathbb{T} $-gain graph $ \Phi $ is ordering independent, then all the maximum (resp., minimum) gain distance Laplacian matrices are equal and the equal matrices is defined as $\mathcal{DL}^{\max}(\Phi):=\Tr(G)-\mathcal{D}^{\max}(\Phi) $ (resp., $ \mathcal{DL}^{\min}(\Phi):=\Tr(G)-\mathcal{D}^{\min}(\Phi) $). For a distance compatible $\mathbb{T}$-gain graph $\Phi$, \textit{the gain distance Laplacian matrix} is defined as $\mathcal{DL}(\Phi):= \mathcal{DL}^{\max}(\Phi) = \mathcal{DL}^{\min}(\Phi).$\\

\textit{The maximum and minimum gain distance signless Laplacian matrices} of a connected $ \mathbb{T} $-gain graph $ \Phi=(G, \varphi) $ associated with an ordered vertex set $ (V(G), <) $ are defined as follows:  $  \mathcal{DQ}^{\max}_{<}(\Phi):=\Tr(G)+\mathcal{D}^{\max}_{<}(\Phi) $ and
		 $ \mathcal{DQ}^{\min}_{<}(\Phi):=\Tr(G)+\mathcal{D}^{\min}_{<}(\Phi) $, respectively. 
If $ \Phi $ is vertex ordering independent, then  define $\mathcal{DQ}^{\max}(\Phi):=\mathcal{DQ}^{\max}_{<}(\Phi)$ and $\mathcal{DQ}^{\min}(\Phi):=\mathcal{DQ}^{\min}_{<}(\Phi)$, where $ < $ the standard vertex ordering.  For distance compatible $\mathbb{T}$-gain graph $ \Phi $,  \textit{the gain distance signless Laplacian matrix} is  defined by $\mathcal{DQ}(\Phi):=\mathcal{DQ}^{\max}(\Phi)=\mathcal{DQ}^{\min}(\Phi).$\\

 Let $ \Phi=(G, \varphi) $ be a $ \mathbb{T} $-gain graph and $ w:E(G)\rightarrow \mathbb{R}^{+} $ be a weight function. A \textit{positively weighted $ \mathbb{T} $-gain graph} associated with $ \Phi $ and $ w $, denoted by $ \Phi_w=(G, \varphi, w) $ or simply $ \Phi_w=(G, \varphi_w) $ where $ \varphi_w:\vec{E}(G)\rightarrow \mathbb{C} $ is defined as $ \varphi_w(\vec{e}_{i,j})=\varphi(\vec{e}_{i,j})w(e_{i,j}).$ The adjacency matrix of $ \Phi_w $ is the Hermitian matrix $ A(\Phi_w) $ and its $ (i,j)th $ entry is $ \varphi_w(\vec{e}_{i,j}) $ if $ v_i \sim v_j $ and zero otherwise. The Laplacian matrix of a positively weighted $ \mathbb{T} $-gain graph $ \Phi_w $, denoted by $ L(\Phi_w) $, and is defined by $ L(\Phi_w):=D(\Phi_w)-A(\Phi_w), $
where $ D(\Phi_w) $ is a diagonal matrix whose $ (i,i)th $ entry is $ \sum\limits_{v_i \sim v_j}w(e_{i,j}) $. Note that a gain distance Laplacian matrix can be identified with a Laplacian of a positively weighted $ \mathbb{T} $-gain graph.\\

 Let $ G $ be a simple graph with $ m $ edges and $ n $ vertices. Let us consider an arbitrary but fixed orientation of $ G $. For an oriented edge $ \vec{e}=\overrightarrow{v_iv_j} $, define $ h(\vec{e}):=v_j $ and $ t(\vec{e}):=v_i $. The\textit{ incidence matrix of $ \Phi_w $} corresponding to an orientation of $ G $,  denoted by $ I(\Phi_w) $, is the $ n\times m $ matrix with the  $ (i,j)th $ entry is:
$$\eta_{v_i\vec{e}_j} = \left\{ \begin{array}{rcl}
\varphi(\vec{e}_{j})\sqrt{w(e_j)} & \mbox{for}
& t(\vec{e}_j)=v_i \\
 -\sqrt{w(e_j)} & \mbox{for} & h(\vec{e}_j)=v_i\\
0 &  \mbox{} & \text{otherwise.}
\end{array}\right.$$

For the sake of completeness, we include a couple of known results on spectral properties of gain distance Laplacian matrices.
\begin{theorem}[{\cite{GDL}}]\label{th3.1}
Let $ \Phi_w $ be a positively weighted $ \mathbb{T} $-gain graph. Then $ L(\Phi_w)=I(\Phi_w)I(\Phi_w)^{*}.$
\end{theorem}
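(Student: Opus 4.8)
The plan is to prove the identity entrywise, comparing the $(i,k)$ entry of $I(\Phi_w)I(\Phi_w)^{*}$ with that of $L(\Phi_w)=D(\Phi_w)-A(\Phi_w)$, and splitting into the diagonal case $i=k$ and the off-diagonal case $i\neq k$. Writing the matrix product out, the $(i,k)$ entry of $I(\Phi_w)I(\Phi_w)^{*}$ is
$$
\left[I(\Phi_w)I(\Phi_w)^{*}\right]_{ik}=\sum_{j=1}^{m}\eta_{v_i\vec{e}_j}\,\overline{\eta_{v_k\vec{e}_j}},
$$
and the essential observation is that a summand is nonzero only when the edge $\vec{e}_j$ is incident to \emph{both} $v_i$ and $v_k$. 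Since $G$ is simple, at most one edge joins any two distinct vertices, so the off-diagonal sums collapse to a single term.

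For the diagonal entries ($i=k$) I would note that $\left[I(\Phi_w)I(\Phi_w)^{*}\right]_{ii}=\sum_{j}|\eta_{v_i\vec{e}_j}|^{2}$, where the sum ranges over edges incident to $v_i$. Whether $v_i$ is the tail or the head of $\vec{e}_j$, the magnitude of the corresponding incidence entry is $|\varphi(\vec{e}_j)\sqrt{w(e_j)}|=\sqrt{w(e_j)}$ or $|-\sqrt{w(e_j)}|=\sqrt{w(e_j)}$, using $|\varphi(\vec{e}_j)|=1$. Hence each incident edge contributes exactly $w(e_j)$, so the sum equals $\sum_{v_i\sim v_j}w(e_{i,j})$, which is precisely the $(i,i)$ entry of $D(\Phi_w)$; since $A(\Phi_w)$ has zero diagonal, this matches $L(\Phi_w)_{ii}$.

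For the off-diagonal entries ($i\neq k$), if $v_i\not\sim v_k$ no edge meets both vertices, so the entry is $0=-A(\Phi_w)_{ik}=L(\Phi_w)_{ik}$. If $v_i\sim v_k$, the unique edge between them appears in the fixed orientation either as $\overrightarrow{v_iv_k}$ or as $\overrightarrow{v_kv_i}$, and I would treat both. In the first case the product of incidence entries is $\bigl(\varphi(\vec{e}_{i,k})\sqrt{w}\bigr)\overline{(-\sqrt{w})}=-\varphi(\vec{e}_{i,k})\,w(e_{i,k})$; in the second case it is $(-\sqrt{w})\,\overline{\varphi(\vec{e}_{k,i})\sqrt{w}}=-\overline{\varphi(\vec{e}_{k,i})}\,w(e_{i,k})$, and here I would invoke the gain relation $\varphi(\vec{e}_{k,i})=\overline{\varphi(\vec{e}_{i,k})}$ to rewrite $\overline{\varphi(\vec{e}_{k,i})}=\varphi(\vec{e}_{i,k})$, again yielding $-\varphi(\vec{e}_{i,k})\,w(e_{i,k})$. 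Either way the entry equals $-\varphi_w(\vec{e}_{i,k})=-A(\Phi_w)_{ik}=L(\Phi_w)_{ik}$, completing the comparison.

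The computation is elementary, so the only real point requiring care—and the step I would flag as the crux—is verifying that the identity is independent of the arbitrary fixed orientation of $G$ used to build $I(\Phi_w)$. This is exactly what the second off-diagonal case secures: the conjugate-symmetry $\varphi(\vec{e}_{k,i})=\overline{\varphi(\vec{e}_{i,k})}$ of the gain function is what makes the two orientation choices produce the same entry, so that reversing any edge's orientation leaves $I(\Phi_w)I(\Phi_w)^{*}$ unchanged and equal to the orientation-free matrix $L(\Phi_w)$.
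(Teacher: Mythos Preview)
Your entrywise verification is correct and is exactly the standard argument for this type of identity. Note, however, that the paper does not actually supply its own proof of this statement: Theorem~\ref{th3.1} is quoted from \cite{GDL} as a known result (the paper explicitly says it is including ``a couple of known results'' for completeness), so there is no in-paper proof to compare against. Your argument is the natural one and would be what one expects the cited source to contain.
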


%For the sake of completenes, we include the proofs. 

A positively weighted $ \mathbb{T} $-gain graph $ \Phi_w=(G, \varphi, w) $ is  \textit{balanced} if  $\Phi=(G, \varphi)  $ is balanced. Now we consider $ G $ as a connected graph.
  
\begin{theorem}[{\cite{GDL}}]\label{th3.3}
  	Let $ \Phi_w=(G, \varphi, w) $ be a positively weighted connected $ \mathbb{T} $-gain graph. Then $ \Phi=(G, \varphi) $ is balanced if and only if $\det L(\Phi_w)=0$.
  \end{theorem}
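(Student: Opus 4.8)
The plan is to exploit the factorization $L(\Phi_w) = I(\Phi_w) I(\Phi_w)^{*}$ from Theorem \ref{th3.1}, which exhibits $L(\Phi_w)$ as positive semidefinite. Consequently, for any $x \in \mathbb{C}^n$ one has $x^{*} L(\Phi_w) x = \| I(\Phi_w)^{*} x \|^2$, so that $L(\Phi_w) x = 0$ if and only if $I(\Phi_w)^{*} x = 0$. Hence $\det L(\Phi_w) = 0$ is equivalent to the existence of a nonzero vector $x$ in the kernel of $I(\Phi_w)^{*}$, and the whole statement reduces to deciding when this kernel is nontrivial.

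The first technical step is to translate $I(\Phi_w)^{*} x = 0$ into an edgewise condition. Reading off the column of $I(\Phi_w)$ indexed by an oriented edge $\vec{e}_j = \overrightarrow{v_a v_b}$, the $j$th coordinate of $I(\Phi_w)^{*} x$ equals $\sqrt{w(e_j)}\,(\overline{\varphi(\vec{e}_j)}\,x_a - x_b)$. Since $w(e_j) > 0$, the kernel condition is exactly $x_b = \overline{\varphi(\vec{e}_{a,b})}\, x_a$ for every edge oriented from $v_a$ to $v_b$. The crucial observation is that $|\varphi| = 1$ forces $|x_a| = |x_b|$ for adjacent vertices, so by connectivity of $G$ every kernel vector has all coordinates of a common modulus; in particular, a kernel vector is either identically zero or nowhere zero.

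For the implication ``$\Phi$ balanced $\Rightarrow \det L(\Phi_w) = 0$'', I would use that neutrality of all cycles makes the path gain well defined: for any vertices $s,t$, the value $\varphi(sPt)$ is independent of the chosen shortest (indeed, of any) $s$--$t$ path, because any two such paths differ by a product of neutral cycles. Fixing a root $v_1$ and setting $x_{v_1} := 1$ and $x_t := \overline{\varphi(v_1 P v_t)}$, the relation $\varphi(v_1 P v_b) = \varphi(v_1 P v_a)\varphi(\vec{e}_{a,b})$ along an edge $v_a \to v_b$ gives $x_b = \overline{\varphi(\vec{e}_{a,b})}\,x_a$, so $x$ is a nonzero kernel vector and $L(\Phi_w)$ is singular.

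For the converse, suppose $\det L(\Phi_w) = 0$ and choose a nonzero kernel vector $x$; by the modulus remark every $x_i \neq 0$. Along any oriented cycle $\vec{C}: v_{i_1} \to v_{i_2} \to \cdots \to v_{i_k} \to v_{i_1}$, the edgewise relations compose to $x_{i_1} = \overline{\varphi(\vec{C})}\,x_{i_1}$, and since $x_{i_1} \neq 0$ this yields $\varphi(\vec{C}) = 1$. As every cycle is neutral, $\Phi$ is balanced, which completes the equivalence. I expect the main care to lie in the orientation-and-conjugation bookkeeping of the edgewise condition, in the cycle-composition identity, and in the connectivity argument guaranteeing that a nonzero kernel vector vanishes at no vertex; the positive semidefiniteness supplied by Theorem \ref{th3.1} is what makes all of this reduce to a clean kernel computation rather than a determinant expansion.
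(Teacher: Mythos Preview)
Your argument is correct. The paper itself does not prove Theorem~\ref{th3.3}; it is quoted from \cite{GDL} as a preliminary result (note the citation attached to the theorem statement and the sentence ``For the sake of completeness, we include a couple of known results\ldots'' preceding Theorems~\ref{th3.1}--\ref{th3.4}). So there is no in-paper proof to compare against.

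That said, your route is exactly the natural one and is essentially how such results are proved in the gain/signed-graph literature: use the incidence factorization of Theorem~\ref{th3.1} to reduce singularity of $L(\Phi_w)$ to the existence of a nonzero $x$ with $I(\Phi_w)^{*}x=0$, read that off as the edgewise relation $x_b=\overline{\varphi(\vec{e}_{a,b})}\,x_a$, and then (i) build such an $x$ from path gains when $\Phi$ is balanced, (ii) deduce neutrality of every cycle from a nonzero kernel vector via connectivity. The only point worth a sentence of extra care is the claim that balance makes the path gain $\varphi(v_1Pv_t)$ independent of the chosen path: two $v_1$--$v_t$ walks concatenate to a closed walk, not necessarily a simple cycle, so one should remark that in a balanced $\mathbb{T}$-gain graph every closed walk has gain $1$ (closed walks decompose into simple cycles and backtracks, each of gain $1$). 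With that remark your construction of the kernel vector is watertight.
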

Let us present the following equivalence between the balance of a  $ \mathbb{T} $-gain graph, the determinant of gain distance Laplacian matrices and the distance compatibility.  

\begin{theorem}[{\cite{GDL}}]\label{th3.4}
	Let $ \Phi = (G,\varphi) $ be a $ \mathbb{T} $-gain graph. Then the following are equivalent: 
	\begin{itemize}
		\item[(1)] $ \Phi $ is balanced.
		\item [(2)] $ \det \mathcal{DL}^{\max}(\Phi)=0 $.
		\item [(3)] $ \det \mathcal{DL}^{\min}(\Phi)=0 $.
		\item[(4)] $ \mathcal{DL}^{\max}(\Phi)= \mathcal{DL}^{\min}(\Phi)$ and $ \det \mathcal{DL}(\Phi)=0$.
	\end{itemize} 
\end{theorem}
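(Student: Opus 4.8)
The plan is to realize each gain distance Laplacian matrix as the Laplacian of a suitable positively weighted complete $\mathbb{T}$-gain graph, and then to invoke the determinant criterion for balance (Theorem \ref{th3.3}) together with the balance equivalences of Theorem \ref{Th1.1}. The four conditions then collapse into a single chain of equivalences.

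First I would make the identification explicit. Fix the standard ordering $<$ and consider the complete $\mathbb{T}$-gain graph $K^{\mathcal{D}^{\max}_{<}}(\Phi)$ on $V(G)$, equipped with the weight function $w(e_{ij})=d(v_i,v_j)$. Since $G$ is connected and $v_i\neq v_j$, every such weight satisfies $d(v_i,v_j)\geq 1>0$, so $w$ is a genuine positive weight function and the underlying graph $K_n$ is connected. For the associated positively weighted $\mathbb{T}$-gain graph $\big(K^{\mathcal{D}^{\max}_{<}}(\Phi)\big)_w$, the adjacency entry in position $(i,j)$ is $\varphi^{<}_{\max}(v_i,v_j)\,d(v_i,v_j)=d^{<}_{\max}(v_i,v_j)$, so its adjacency matrix is exactly $\mathcal{D}^{\max}_{<}(\Phi)$; and its degree diagonal entry is $\sum_{j}d(v_i,v_j)=\Tr(v_i)$, so its degree matrix is $\Tr(G)$. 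Hence $L\big((K^{\mathcal{D}^{\max}_{<}}(\Phi))_w\big)=\Tr(G)-\mathcal{D}^{\max}_{<}(\Phi)=\mathcal{DL}^{\max}(\Phi)$, and the analogous identity holds for the minimum version.

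With this identification the equivalences follow quickly. For $(1)\Leftrightarrow(2)$, I would apply Theorem \ref{th3.3} to the connected positively weighted graph $\big(K^{\mathcal{D}^{\max}}(\Phi)\big)_w$: its Laplacian determinant $\det\mathcal{DL}^{\max}(\Phi)$ vanishes if and only if $K^{\mathcal{D}^{\max}}(\Phi)$ is balanced, and by the equivalence (i)$\Leftrightarrow$(ii) of Theorem \ref{Th1.1} this happens exactly when $\Phi$ is balanced (recall that, by definition, a weighted graph inherits balance from its unweighted gain graph, so the two notions of balance match). The equivalence $(1)\Leftrightarrow(3)$ is obtained identically, replacing ``max'' by ``min'' and using (i)$\Leftrightarrow$(iii) of Theorem \ref{Th1.1}. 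Finally, for $(1)\Leftrightarrow(4)$: if $\Phi$ is balanced then Theorem \ref{Th1.1}(iv) gives $\mathcal{D}^{\max}(\Phi)=\mathcal{D}^{\min}(\Phi)$, whence $\mathcal{DL}^{\max}(\Phi)=\mathcal{DL}^{\min}(\Phi)=\mathcal{DL}(\Phi)$, while $\det\mathcal{DL}(\Phi)=0$ is just $(2)$; conversely $(4)$ contains the statement $\det\mathcal{DL}^{\max}(\Phi)=0$, which is $(2)$, so $\Phi$ is balanced.

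The steps are individually short; the one requiring care is the identification in the second paragraph, where I must confirm that the hypotheses of Theorem \ref{th3.3} — positivity of all weights and connectedness of the underlying complete graph — are met, and that the notion of balance used there (balance of the unweighted gain graph) coincides with the balance appearing in Theorem \ref{Th1.1}. A secondary subtlety worth flagging is the implicit ordering-independence needed to write $\mathcal{DL}^{\max}(\Phi)$ and $K^{\mathcal{D}^{\max}}(\Phi)$ without a subscript; this is harmless, because the determinant criterion may be run for the fixed standard ordering, and in the balanced case Theorem \ref{Th1.1}(iv) guarantees that the maximum and minimum objects coincide, so the unsubscripted notation is legitimate precisely where it is used.
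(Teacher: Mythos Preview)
The paper does not supply its own proof of this statement: it is quoted verbatim from \cite{GDL} and left unproved, so there is no in-paper argument to compare against directly. That said, your proof is correct, and the identification you make---writing $\mathcal{DL}^{\max}_{<}(\Phi)$ as $L(\Psi_w)$ for the positively weighted complete $\mathbb{T}$-gain graph $\Psi_w=(K_n,\psi,w)$ with $\psi(\vec{e}_{i,j})=\varphi^{<}_{\max}(v_i,v_j)$ and $w(e_{i,j})=d_G(v_i,v_j)$---is exactly the construction the paper itself uses in the proofs of Proposition~\ref{prop.4.1} and Theorem~\ref{th3.5}. Your chain (Theorem~\ref{th3.3} for the determinant criterion, then Theorem~\ref{Th1.1} to pass between balance of $\Phi$ and balance of $K^{\mathcal{D}^{\max}}(\Phi)$ or $K^{\mathcal{D}^{\min}}(\Phi)$) is the natural route and matches the spirit of the surrounding arguments; your handling of the ordering-independence subtlety is also appropriate.
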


\begin{theorem}\label{th3.4}
	A $ \mathbb{T} $-gain graph $ \Phi=(G, \varphi) $ is balanced if and only if $ \mathcal{DL}^{\max}(\Phi)=\mathcal{DL}^{\min}(\Phi)=\mathcal{DL}(\Phi) $ and $ \mathcal{DL}(\Phi) $ is co-spectral with $ \mathcal{DL}(G) $.
\end{theorem}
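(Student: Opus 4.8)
The plan is to prove both implications, treating the ``only if'' direction as the substantive one and reducing the ``if'' direction to the equivalence already recorded from \cite{GDL}. Throughout I will exploit the fact that a gain distance Laplacian is $\Tr(G)$ (which depends only on the underlying graph) minus a gain distance matrix, so that any diagonal‐unitary conjugation acts trivially on the transmission part and only reshuffles the off‐diagonal gains.

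For the forward direction, suppose $\Phi$ is balanced. By Theorem \ref{Th1.1} this forces $\mathcal{D}^{\max}(\Phi)=\mathcal{D}^{\min}(\Phi)$, hence $\mathcal{DL}^{\max}(\Phi)=\mathcal{DL}^{\min}(\Phi)=\mathcal{DL}(\Phi)$, and it also forces the associated complete $\mathbb{T}$-gain graph $K^{\mathcal{D}}(\Phi)$ to be balanced. I would then invoke the standard characterization that a balanced gain graph is switching equivalent to its all-ones gain graph (see \cite{reff1,Zaslav}): there is a switching function $\zeta:V(G)\to\mathbb{T}$ with $\varphi^{<}_{\max}(v_i,v_j)=\overline{\zeta(v_i)}\,\zeta(v_j)$ for all $i\ne j$. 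Setting $D=\diag(\zeta(v_1),\dots,\zeta(v_n))$, a unitary diagonal matrix, the $(i,j)$ entry of $D^{*}\mathcal{D}(\Phi)D$ equals $\overline{\zeta(v_i)}\,\varphi^{<}_{\max}(v_i,v_j)\,d(v_i,v_j)\,\zeta(v_j)=d(v_i,v_j)$, so $D^{*}\mathcal{D}(\Phi)D=\mathcal{D}(G)$. Since $\Tr(G)$ is diagonal, $D^{*}\Tr(G)D=\Tr(G)$, and subtracting yields $D^{*}\mathcal{DL}(\Phi)D=\mathcal{DL}(G)$; thus $\mathcal{DL}(\Phi)$ and $\mathcal{DL}(G)$ are unitarily similar, hence co-spectral.

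For the converse, assume $\mathcal{DL}^{\max}(\Phi)=\mathcal{DL}^{\min}(\Phi)=\mathcal{DL}(\Phi)$ and that $\mathcal{DL}(\Phi)$ is co-spectral with $\mathcal{DL}(G)$. Since $G$ is connected, every row of $\mathcal{DL}(G)=\Tr(G)-\mathcal{D}(G)$ sums to zero, so the all-ones vector lies in its kernel and $0\in\spec(\mathcal{DL}(G))$. By co-spectrality $0\in\spec(\mathcal{DL}(\Phi))$, that is $\det\mathcal{DL}(\Phi)=0$. Together with the assumed equality $\mathcal{DL}^{\max}(\Phi)=\mathcal{DL}^{\min}(\Phi)$, this is precisely hypothesis (4) of the equivalence from \cite{GDL} stated above, which yields that $\Phi$ is balanced.

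The main obstacle I anticipate is the forward direction: turning the combinatorial statement ``$K^{\mathcal{D}}(\Phi)$ is balanced'' into the concrete diagonal-unitary conjugation $D^{*}\mathcal{D}(\Phi)D=\mathcal{D}(G)$, and in particular being careful that the switching extracted from balance matches the lexicographic definition of $\varphi^{<}_{\max}$ entrywise and that the transmission diagonal $\Tr(G)$ is genuinely invariant under the conjugation so that the subtraction carries through. Everything else reduces to bookkeeping and to invoking the already established equivalences from Theorem \ref{Th1.1} and \cite{GDL}.
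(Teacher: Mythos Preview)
Your proposal is correct and follows essentially the same route as the paper: in the forward direction both arguments produce a diagonal unitary from balance (the paper via Lemma~\ref{Th0.1}, you via the switching characterization applied to $K^{\mathcal{D}}(\Phi)$) and conjugate $\mathcal{DL}(\Phi)$ to $\mathcal{DL}(G)$, while in the converse both read off $\det\mathcal{DL}(\Phi)=\det\mathcal{DL}(G)=0$ from co-spectrality and invoke the equivalence from \cite{GDL}. One trivial slip to fix: the formula $\varphi^{<}_{\max}(v_i,v_j)=\overline{\zeta(v_i)}\,\zeta(v_j)$ is inconsistent with your subsequent entrywise computation of $D^{*}\mathcal{D}(\Phi)D$ (which actually requires $\varphi^{<}_{\max}(v_i,v_j)=\zeta(v_i)\,\overline{\zeta(v_j)}$, or equivalently that you compute $D\mathcal{D}(\Phi)D^{*}$ instead); either correction works and the argument goes through unchanged.
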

\begin{proof}
	Let $\Phi=(G,\varphi)$ be a balanced $ \mathbb{T} $-gain graph. Then there exist a switching function $ \zeta $ such that $ \Phi^\zeta =G$. Now by Theorem \ref{Th1.1}(iv), $ \mathcal{D}(\Phi) $ exist. Then $ \mathcal{DL}^{\max}(\Phi)=\mathcal{DL}^{\min}(\Phi)=\mathcal{DL}(\Phi) $. Also by the proof of the Lemma \ref{Th0.1}, there exist a diagonal unitary matrix $ P=diag(\zeta(v_1), \zeta(v_2), \dots, \zeta(v_n)) $, such that $ \mathcal{D}(\Phi)=P^{*}\mathcal{D}(G)P $. Then 
	\begin{align*}
		\mathcal{DL}(\Phi)&=Tr(G)-\mathcal{D}(\Phi)=Tr(G)-P^{*}\mathcal{D}(G)P=P^{*}(Tr(G)-\mathcal{D}(G))P=P^{*}\mathcal{DL}(G)P.
	\end{align*}
Therefore, $ \mathcal{DL}(\Phi) $ is co-spectral with $ \mathcal{DL}(G) $. Conversely, suppose $ \mathcal{DL}^{\max}(\Phi)=\mathcal{DL}^{\min}(\Phi)=\mathcal{DL}(\Phi) $ and $ \mathcal{DL}(\Phi) $ is co-spectral with $ \mathcal{DL}(G) $. Then $ \det \mathcal{DL}(\Phi)= \det \mathcal{DL}(G)=0 $. Therefore, by Theorem \ref{th3.4}, $ \Phi $ is balanced.
\end{proof}

\begin{prop} \label{prop.4.1}
	Let $ \Phi=(G, \varphi) $ be a connected $ \mathbb{T} $-gain graph with vertex ordering $ < $. Then the distance Laplacian matrices of $ \Phi $ associated with $ < $ are positive semi-definite.
\end{prop}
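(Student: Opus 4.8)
The plan is to realize each gain distance Laplacian matrix as the Laplacian of a suitable positively weighted $\mathbb{T}$-gain graph, and then invoke the Gram-matrix factorization from Theorem \ref{th3.1}. The proposition is short precisely because all the content lies in making this identification correctly; once it is in place, positive semi-definiteness is automatic.

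First I would set up the identification explicitly for $\mathcal{DL}^{\max}_{<}(\Phi)$. Consider the complete $\mathbb{T}$-gain graph $K^{\mathcal{D}^{\max}_{<}}(\Phi)$ on the vertex set $V(G)$, whose oriented edge $\vec{e}_{i,j}$ carries gain $\varphi^{<}_{\max}(v_i,v_j)$, and equip it with the weight function $w(e_{i,j}):=d(v_i,v_j)$. Since $G$ is connected, $d(v_i,v_j)\ge 1>0$ for all $i\neq j$, so $w$ is a genuine positive weight function and $\Phi_w$ is a positively weighted $\mathbb{T}$-gain graph in the sense defined above. Its weighted adjacency entry is $\varphi_w(\vec{e}_{i,j})=\varphi^{<}_{\max}(v_i,v_j)\,d(v_i,v_j)=d^{<}_{\max}(v_i,v_j)$, so $A(\Phi_w)=\mathcal{D}^{\max}_{<}(\Phi)$; and its weighted-degree diagonal entry is $\sum_{j\neq i} d(v_i,v_j)=\Tr(v_i)$, so $D(\Phi_w)=\Tr(G)$. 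Hence $L(\Phi_w)=D(\Phi_w)-A(\Phi_w)=\Tr(G)-\mathcal{D}^{\max}_{<}(\Phi)=\mathcal{DL}^{\max}_{<}(\Phi)$.

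Next I would apply Theorem \ref{th3.1} to this $\Phi_w$, which gives $\mathcal{DL}^{\max}_{<}(\Phi)=L(\Phi_w)=I(\Phi_w)\,I(\Phi_w)^{*}$. For any $x\in\mathbb{C}^n$ this yields $x^{*}\mathcal{DL}^{\max}_{<}(\Phi)\,x=x^{*}I(\Phi_w)I(\Phi_w)^{*}x=\|I(\Phi_w)^{*}x\|^2\ge 0$, which is exactly the assertion of positive semi-definiteness. The identical argument, using $\varphi^{<}_{\min}$, the complete gain graph $K^{\mathcal{D}^{\min}_{<}}(\Phi)$, and the same distance weights, handles $\mathcal{DL}^{\min}_{<}(\Phi)$.

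I do not expect a substantial obstacle here. The only step requiring care is the bookkeeping in the identification: checking that the distances form a legitimate positive weight function (which is where connectedness enters) and that the transmission diagonal $\Tr(G)$ coincides with the weighted degree $D(\Phi_w)$ of the complete weighted gain graph, so that the target matrix is literally of the form $II^{*}$. After that, the positive semi-definiteness of a Gram matrix $II^{*}$ is immediate, so the proof reduces to this structural observation rather than any inequality estimate.
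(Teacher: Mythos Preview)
Your proposal is correct and follows essentially the same route as the paper: identify $\mathcal{DL}^{\max}_{<}(\Phi)$ (and likewise the $\min$ version) with the Laplacian of the positively weighted complete $\mathbb{T}$-gain graph $(K_n,\varphi^{<}_{\max},w)$ with $w(e_{i,j})=d(v_i,v_j)$, apply Theorem~\ref{th3.1} to write it as $I\,I^{*}$, and conclude positive semi-definiteness from the Gram form. Your write-up is in fact a bit more careful than the paper's in verifying $A(\Phi_w)=\mathcal{D}^{\max}_{<}(\Phi)$ and $D(\Phi_w)=\Tr(G)$, and your quadratic-form computation $x^{*}II^{*}x=\|I^{*}x\|^{2}$ is the clean version of what the paper intends.
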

\begin{proof}
	Let $ \Phi=(G, \varphi) $ be a connected $ \mathbb{T} $-gain graph of $ n $ vertices with an ordered vertex set $ (V(G), <) $ . Let us consider a positively weighted $ \mathbb{T} $-gain graph $ \Psi_w=(\psi,w,K_n) $, where $ \psi(\vec{e}_{i,j})=\varphi_{\max}^{<}(v_i,v_j)$ and $ w(e_{i,j})=d_G(v_i,v_j) $, for any $ i\neq j $. Then $ L(\Psi_w)=\mathcal{DL}^{\max}_{<}(\Phi)$. By Theorem 3.1, $ L(\Psi_w)=I(\Psi_w)I(\Psi_w)^{*}$. Now, for any $ z\in \mathbb{C}^{n}\setminus \{0 \} $, 
	\begin{align*}
		zL(\Psi_w)z^*&=\frac{1}{||z||^2}zI(\Psi_w)z^*zI(\Psi_w)^*z^*\\
		&=\frac{1}{||z||^2} ||zI(\Psi_w)z^*||^2\geq 0.	
	\end{align*}
Hence $\mathcal{DL}^{\max}_{<}(\Phi)$ is positive semi-definite. Similarly we can  $ \psi(\vec{e}_{i,j})=\varphi_{\min}^{<}(v_i,v_j)$ and $ w(e_{i,j})=d_G(v_i,v_j) $, for any $ i\neq j $. Then $ \mathcal{DL}^{\min}_{<}(\Phi)$ is positive semi-definite.
\end{proof}

\begin{theorem}\label{th3.5}
	 Let $ \Phi=(G, \varphi) $ be a $ \mathbb{T} $-gain graph with a vertex ordering $ < $. Then one of the following statements holds.
	 \begin{enumerate}
	 	\item [(1)] $\nullity(\mathcal{DL}^{\max}_<(\Phi))=1$ if and only if $ \Phi $ is balanced.
	 	\item [(2)] $ \nullity(\mathcal{DL}^{\max}_<(\Phi))=0$ if and only if $ \Phi $ is unbalanced.
	 \end{enumerate}
\end{theorem}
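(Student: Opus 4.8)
The plan is to realize $\mathcal{DL}^{\max}_<(\Phi)$ as the Laplacian of a positively weighted complete $\mathbb{T}$-gain graph, read its nullity off the rank of an incidence matrix, and then decide between the two admissible values using the balance criterion of Theorem \ref{th3.3}. Concretely, exactly as in the proof of Proposition \ref{prop.4.1}, I would introduce the positively weighted $\mathbb{T}$-gain graph $\Psi_w=(K_n,\psi,w)$ with $\psi(\vec e_{i,j})=\varphi^{<}_{\max}(v_i,v_j)$ and $w(e_{i,j})=d_G(v_i,v_j)$, so that $\Psi=(K_n,\psi)=K^{\mathcal{D}^{\max}_<}(\Phi)$ and $L(\Psi_w)=\mathcal{DL}^{\max}_<(\Phi)$. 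By Theorem \ref{th3.1} we have $\mathcal{DL}^{\max}_<(\Phi)=I(\Psi_w)I(\Psi_w)^{*}$, and since this matrix is Hermitian,
\[
\nullity(\mathcal{DL}^{\max}_<(\Phi)) = n-\rank(\mathcal{DL}^{\max}_<(\Phi)) = n-\rank(I(\Psi_w)).
\]
Thus the theorem reduces to showing $\rank(I(\Psi_w))\in\{n-1,n\}$ and identifying which case occurs.

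Next I would establish the inequality $\rank(I(\Psi_w))\ge n-1$, which forces $\nullity\le 1$. Fix any spanning tree $T$ of $K_n$ and let $B$ be the $n\times(n-1)$ submatrix of $I(\Psi_w)$ whose columns are indexed by the edges of $T$. Every incidence entry, being $\varphi(\vec e)\sqrt{w(e)}$ or $-\sqrt{w(e)}$ with $w(e)=d_G>0$, is nonzero. A leaf-peeling argument then shows the columns of $B$ are linearly independent: a leaf $v$ of $T$ meets exactly one tree edge, so row $v$ of $B$ has a single nonzero entry, which forces the coefficient of that edge in any relation $Bc=0$ to vanish; deleting $v$ together with its edge and inducting annihilates all coefficients. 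Hence $\rank(I(\Psi_w))\ge\rank(B)=n-1$, while trivially $\rank(I(\Psi_w))\le n$, so $\nullity(\mathcal{DL}^{\max}_<(\Phi))\in\{0,1\}$.

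To decide the value, I would apply Theorem \ref{th3.3} to the connected positively weighted $\mathbb{T}$-gain graph $\Psi_w$: the graph $\Psi=K^{\mathcal{D}^{\max}_<}(\Phi)$ is balanced if and only if $\det L(\Psi_w)=\det\mathcal{DL}^{\max}_<(\Phi)=0$, that is, if and only if $\nullity=1$. By Theorem \ref{Th1.1}, the complete gain graph $K^{\mathcal{D}^{\max}_<}(\Phi)$ is balanced if and only if $\Phi$ is balanced. Chaining these equivalences together with $\nullity\in\{0,1\}$ yields $\Phi$ balanced $\iff \nullity(\mathcal{DL}^{\max}_<(\Phi))=1$, and the complementary statement $\Phi$ unbalanced $\iff \nullity(\mathcal{DL}^{\max}_<(\Phi))=0$, which are exactly claims (1) and (2).

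The hard part will be the bound $\nullity\le 1$: Theorem \ref{th3.3} only detects whether the determinant vanishes and says nothing about the multiplicity of the eigenvalue $0$, so the spanning-tree/leaf-peeling rank estimate is genuinely required rather than a formality. A secondary point that I would check explicitly is that Theorem \ref{Th1.1} is being invoked for the specific ordering $<$ rather than for an ordering-independent $K^{\mathcal{D}^{\max}}(\Phi)$; since balance of the associated complete gain graph is governed edge-by-edge by the gains $\varphi^{<}_{\max}$, this causes no difficulty, but it deserves an explicit remark to keep the argument airtight.
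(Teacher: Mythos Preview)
Your proposal is correct and follows the same overall scaffolding as the paper: construct the positively weighted complete gain graph $\Psi_w=(K_n,\psi,w)$ with $L(\Psi_w)=\mathcal{DL}^{\max}_<(\Phi)$, factor it as $I(\Psi_w)I(\Psi_w)^*$ via Theorem~\ref{th3.1}, and then invoke the balance criterion of Theorem~\ref{th3.3} together with the equivalence between balance of $\Phi$ and of $K^{\mathcal{D}^{\max}_<}(\Phi)$.

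The one genuine difference is how you pin down $\nullity\le 1$. You give a \emph{uniform} argument, valid for every gain: the spanning-tree submatrix of $I(\Psi_w)$ has rank $n-1$ by leaf-peeling, so $\rank I(\Psi_w)\ge n-1$ and hence $\nullity(\mathcal{DL}^{\max}_<(\Phi))\in\{0,1\}$ before any case split. The paper instead treats the two cases separately: in the balanced case it switches $\Psi$ to the trivial gain, so $L(\Psi_w)$ is similar to $L((K_n)_w)$, and then computes directly that the left null space of $I((K_n)_w)$ is the span of the all-ones vector (giving $\nullity=1$); in the unbalanced case it simply quotes $\det\ne 0$ from Theorem~\ref{th3.3}/\ref{th3.4}. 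Your route is tidier and avoids the similarity step, while the paper's route has the small bonus of explicitly exhibiting the null vector in the balanced case. Your closing caveat about Theorem~\ref{Th1.1} being stated for $K^{\mathcal{D}^{\max}}(\Phi)$ rather than $K^{\mathcal{D}^{\max}_<}(\Phi)$ is well taken; the paper's own proof uses the same implication (``from the construction of $\Psi$, $\Phi$ is balanced'') without further comment, so you are being more careful, not less.
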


\begin{proof}
	Let $ \Phi=(G, \varphi) $ be a connected $ \mathbb{T} $-gain graph of $ n $ vertices with an ordered vertex set $ (V(G), <) $ . Let us consider a positively weighted $ \mathbb{T} $-gain graph $ \Psi_w=(\psi,w,K_n) $, where $ \psi(\vec{e}_{i,j})=\varphi_{\max}^{<}(v_i,v_j)$ and $ w(e_{i,j})=d_G(v_i,v_j) $, for any $ i\neq j $. Then $ L(\Psi_w)=\mathcal{DL}^{\max}_{<}(\Phi)$. By Theorem 3.1, $ L(\Psi_w)=I(\Psi_w)I(\Psi_w)^{*}$.
	
	\noindent $ (1):$ Suppose $\nullity(\mathcal{DL}^{\max}_<(\Phi))=1$. Then $ \det \mathcal{DL}^{\max}_<(\Phi)=\det L(\Psi_w)=0 $. By Theorem 3.2, $ \Psi $ is balanced. Therefore, from the construction of $ \Psi $, $ \Phi $ is balanced. 
	Conversely, suppose $ \Phi $ is balanced. Then $ \Psi=(K_n,\psi) $ is balanced. Therefore, $ L(\Psi_w) $ and $ L((K_{n})_w) $ are similar. In fact, $ L((K_{n})_w)=I((K_{n})_w)I((K_{n})_w)^t$. Let $ X=(x_1,x_2, \dots, x_n)\in \mathbb{R}^{n}\setminus \{ 0\} $ be a column vector. Then $ X^tI((K_{n})_w)=0 $ implies $ x_i=x_j $ for all $ i\neq j $. Thus all the components of $ X $ are equal. Therefore, the dimension of left null space of $ I((K_{n})_w) $ is at most $ 1 $. Since the rows of $ I((K_{n})_w) $ are dependent, so the nullity of  $ I((K_{n})_w) $ is at least $1$. Thus $\nullity(I((K_{n})_w))=1$. Therefore, $  \nullity(\mathcal{DL}^{\max}_<(\Phi))=\nullity (L(\Psi_w))=\nullity (I((K_{n})_w))=1.$\\
	\noindent $ (2):$ If statement $ (1) $ is not hold. Then by Theorem 3.4, $ \nullity(\mathcal{DL}^{\max}_<(\Phi))=0$ if and only if $ \Phi $ is unbalanced.
\end{proof}

\begin{theorem}\label{th3.6}
	For a $ \mathbb{T} $-gain graph $\Phi$ with a vertex ordering $ < $, one of the following statement holds.
	\begin{enumerate}
		\item [(1)] $ \nullity(\mathcal{DL}^{\min}_<(\Phi))=1$ if and only if $ \Phi $ is balanced.
		\item [(2)] $ \nullity(\mathcal{DL}^{\min}_<(\Phi))=0$ if and only if $ \Phi $ is unbalanced.
	\end{enumerate}
\end{theorem}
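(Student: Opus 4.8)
The plan is to run the argument of Theorem~\ref{th3.5} verbatim, replacing the maximum auxiliary gain function by the minimum one. First I would form the positively weighted $\mathbb{T}$-gain graph $\Psi_w = (\psi, w, K_n)$ on the complete graph $K_n$, now taking $\psi(\vec{e}_{i,j}) = \varphi_{\min}^{<}(v_i, v_j)$ and $w(e_{i,j}) = d_G(v_i, v_j)$ for $i \neq j$. By the definition of the minimum gain distance Laplacian this yields $L(\Psi_w) = \mathcal{DL}^{\min}_{<}(\Phi)$, and Theorem~\ref{th3.1} gives the factorization $L(\Psi_w) = I(\Psi_w) I(\Psi_w)^{*}$. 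In particular $\mathcal{DL}^{\min}_{<}(\Phi)$ is positive semi-definite, which is already recorded in Proposition~\ref{prop.4.1}.

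For the forward direction of~(1), I would observe that $\nullity(\mathcal{DL}^{\min}_{<}(\Phi)) = 1$ forces $\det \mathcal{DL}^{\min}_{<}(\Phi) = \det L(\Psi_w) = 0$, so Theorem~\ref{th3.3} makes $\Psi = (K_n, \psi)$ balanced; since $\Psi$ is precisely the complete gain graph $K^{\mathcal{D}^{\min}_{<}}(\Phi)$, Theorem~\ref{Th1.1}(iii) transfers this balance back to $\Phi$. For the converse I would switch the balanced $\Psi$ to the all-positive weighted complete graph $(K_n)_w$, compute the left kernel of $I((K_n)_w)$, and note that $X^{t} I((K_n)_w) = 0$ forces all coordinates of $X$ to coincide, so this kernel is one-dimensional; since switching preserves similarity of the Laplacians and hence nullity, $\nullity(\mathcal{DL}^{\min}_{<}(\Phi)) = \nullity(I((K_n)_w)) = 1$.

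Statement~(2) then follows immediately from the same determinant criterion: by Theorem~\ref{th3.3}, $\det \mathcal{DL}^{\min}_{<}(\Phi) \neq 0$ exactly when $\Phi$ is unbalanced, and because the matrix is positive semi-definite a nonzero determinant is equivalent to $\nullity(\mathcal{DL}^{\min}_{<}(\Phi)) = 0$. Combined with~(1) this gives the full dichotomy, since the two cases are mutually exclusive and exhaustive.

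The only genuinely delicate point, and the one I would check most carefully, is that the balance of $\Phi$ is equivalent to the balance of the auxiliary complete gain graph $\Psi$ built from $\varphi_{\min}^{<}$ rather than from $\varphi_{\max}^{<}$. This is not automatic from the maximum case, but it is exactly the content of the equivalence (i)$\Leftrightarrow$(iii) in Theorem~\ref{Th1.1}, so the minimum construction is licensed in the same way the maximum one was, and no new computation beyond citing that theorem is needed.
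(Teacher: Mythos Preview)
Your proposal is correct and follows exactly the approach the paper intends: the paper gives no separate proof for Theorem~\ref{th3.6}, so it is implicitly the $\min$-analogue of the proof of Theorem~\ref{th3.5}, which is precisely what you carry out. Your added care in invoking Theorem~\ref{Th1.1}(iii) to justify that balance of the auxiliary complete gain graph built from $\varphi_{\min}^{<}$ transfers back to $\Phi$ is in fact more explicit than the paper's corresponding step in the $\max$ case, where it simply says ``from the construction of $\Psi$, $\Phi$ is balanced.''
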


\begin{corollary}
	Let $\Phi$ be a $\mathbb{T} $-gain graph with a vertex ordering $ < $, Then the following hold:
	\begin{enumerate}
		\item [(1)] Either $ \nullity(\mathcal{DL}^{\max}_<(\Phi))=1$ or $\nullity(\mathcal{DL}^{\max}_<(\Phi))=0.$
		\item [(2)]  Either $ \nullity(\mathcal{DL}^{\min}_<(\Phi))=1$ or $\nullity(\mathcal{DL}^{\min}_<(\Phi))=0.$
	\end{enumerate} 
\end{corollary}
Let us present the following equivalence. 
\begin{theorem}
Let $ \Phi=(G, \varphi) $ be a $ \mathbb{T} $-gain graph. Then the following are equivalent:
\begin{enumerate}
	\item [(1)] $ \Phi $ is balanced.
	\item [(2)] $ \Phi $ is ordering independent and $ \nullity(\mathcal{DL}^{\max}(\Phi))=1.$
	\item [(3)] $ \Phi $ is ordering independent and $ \nullity(\mathcal{DL}^{\min}(\Phi))=1.$
	\item [(4)] $ \Phi $ is distance compatible and $ \nullity(\mathcal{DL}(\Phi))=1.$
	\item [(5)] $ \det \mathcal{DL}^{\max}(\Phi)=0 $
	\item[(6)] $ \det \mathcal{DL}^{\min}(\Phi)=0 $
	\item [(7)] $ \det \mathcal{DL}(\Phi)=0 $
\end{enumerate}
\end{theorem}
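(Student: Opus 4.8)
The plan is to treat this theorem as a consolidation of the nullity characterisations already obtained in Theorems \ref{th3.5} and \ref{th3.6}, the determinant characterisation of balance from \cite{GDL}, and the structural facts about balance, ordering independence and distance compatibility supplied by Theorem \ref{Th1.1}. The connective observation I would record first is purely linear-algebraic: for any square matrix the determinant vanishes exactly when the nullity is at least $1$, and Theorems \ref{th3.5} and \ref{th3.6} already force each gain distance Laplacian (for a fixed ordering) to have nullity in $\{0,1\}$. Hence, for every ordering $<$,
\begin{equation*}
\det \mathcal{DL}^{\max}_<(\Phi)=0 \iff \nullity(\mathcal{DL}^{\max}_<(\Phi))=1,
\end{equation*}
and likewise for the minimum version (Proposition \ref{prop.4.1} confirms these matrices are Hermitian positive semi-definite, which is consistent with, though not needed for, this dichotomy). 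This lets me pass freely between the determinant conditions (5)--(7) and the nullity conditions (2)--(4).

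Next I would dispatch the equivalences among (1), (5), (6), (7). These are exactly the fourfold equivalence of \cite{GDL}: balance holds if and only if $\det\mathcal{DL}^{\max}(\Phi)=0$, if and only if $\det\mathcal{DL}^{\min}(\Phi)=0$, and if and only if both $\mathcal{DL}^{\max}(\Phi)=\mathcal{DL}^{\min}(\Phi)$ and $\det\mathcal{DL}(\Phi)=0$. So I would simply cite it, noting only that writing $\det\mathcal{DL}(\Phi)$ in (7) presupposes distance compatibility, which is already built into that statement.

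The genuinely new content is the coupling of the nullity statements with the well-definedness riders in (2)--(4), and this is where Theorem \ref{Th1.1} does the work. For (1) $\Rightarrow$ (2): if $\Phi$ is balanced then Theorem \ref{Th1.1}(iv) gives $\mathcal{D}^{\max}(\Phi)=\mathcal{D}^{\min}(\Phi)$, so $\Phi$ is distance compatible and hence ordering independent; therefore $\mathcal{DL}^{\max}(\Phi)=\mathcal{DL}^{\max}_<(\Phi)$ for the standard ordering, and Theorem \ref{th3.5}(1) yields $\nullity(\mathcal{DL}^{\max}(\Phi))=1$. For the converse (2) $\Rightarrow$ (1): ordering independence identifies $\mathcal{DL}^{\max}(\Phi)$ with $\mathcal{DL}^{\max}_<(\Phi)$, so $\nullity=1$ forces balance by Theorem \ref{th3.5}(1). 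The equivalence (1) $\iff$ (3) is the verbatim analogue using Theorem \ref{th3.6}. For (1) $\iff$ (4): balance again gives distance compatibility via Theorem \ref{Th1.1}(iv), under which $\mathcal{DL}(\Phi)=\mathcal{DL}^{\max}(\Phi)$, so this case reduces to (1) $\iff$ (2); conversely distance compatibility makes $\mathcal{DL}(\Phi)$ well-defined and equal to $\mathcal{DL}^{\max}_<(\Phi)$, and nullity $1$ gives balance.

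I expect no deep obstacle: the whole argument is bookkeeping to ensure that every matrix named in (2)--(7) is defined before its nullity or determinant is asserted, and that balance is precisely what supplies that definedness. The one point demanding care is the logical direction in which Theorem \ref{Th1.1} is used: in the forward implications I must \emph{deduce} distance compatibility (hence ordering independence) from balance, rather than assume it, since otherwise the unsubscripted matrices $\mathcal{DL}^{\max}(\Phi)$, $\mathcal{DL}^{\min}(\Phi)$ and $\mathcal{DL}(\Phi)$ would not even be defined.
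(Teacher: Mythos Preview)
Your proposal is correct and matches the paper's approach: the paper in fact states this theorem without proof, placing it immediately after Theorems \ref{th3.5}, \ref{th3.6} and their corollary and after the determinant equivalences quoted from \cite{GDL}, so it is evidently intended as a straightforward consolidation of those results. Your write-up supplies exactly the bookkeeping the paper leaves implicit, invoking the same ingredients (Theorem \ref{Th1.1} to extract ordering independence and distance compatibility from balance, Theorems \ref{th3.5}--\ref{th3.6} for the nullity dichotomy, and the cited \cite{GDL} result for the determinant conditions).
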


%==========================================================
\section{Bounds for gain distance Laplacian spectral radius }

In this section we establish an upper bound  of spectral radius for gain distance Laplacian matrices in terms of the Winner index and number of vertices. Then we obtain some lower bounds of spectral radius in terms of distance signless Laplacian matrix of underlying graph and vertex transmission and characterize the equalities. Transmission of a vertex $ v_i $ of a connected graph $ G $ is defined as $ Tr(v_i):=\sum\limits_{j=1}^{n} d(v_i,v_j)$. The \emph{winner index} of a connected graph $ G $ is defined as $ W(G):=\sum\limits_{i>j}d(v_i,v_j) $. 
Let us present the following lower bound.
\begin{theorem}\label{th5.1}
	Let $ \Phi=(G, \varphi) $ be a connected  $ \mathbb{T} $-gain graph of $ n $ vertices with at least one edge and associated with a vertex ordering $ < $. Then 
	\begin{enumerate}
		\item[(i)] $ \rho(\mathcal{DL}^{\max}_{<}(\Phi)) \geq \frac{2W(G)}{n-1} $
		\item [(ii)] $ \rho(\mathcal{DL}^{\min}_{<}(\Phi)) \geq \frac{2W(G)}{n-1} $.
	\end{enumerate} 
Both the inequalities are sharp.
\end{theorem}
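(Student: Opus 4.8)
The plan is to work throughout with $M := \mathcal{DL}^{\max}_{<}(\Phi)$, which by Proposition \ref{prop.4.1} is positive semi-definite; hence its eigenvalues $\lambda_1 \ge \lambda_2 \ge \cdots \ge \lambda_n \ge 0$ are nonnegative and $\rho(M) = \lambda_1$. The argument rests on comparing $\tr(M)$ with $\tr(M^2)$. First I would record that the diagonal entries of $M$ are the transmissions (the gain distance matrix has zero diagonal since $\varphi^{<}_{\max}(s,s)=0$), so $\tr(M) = \sum_i \Tr(v_i) = \sum_i\sum_j d(v_i,v_j) = 2W(G)$, which is strictly positive because $G$ has at least one edge.

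Next I would invoke the elementary inequality for a positive semi-definite matrix: since $0 \le \lambda_i \le \lambda_1$ for all $i$, we have $\sum_i \lambda_i(\lambda_1 - \lambda_i) \ge 0$, i.e. $\lambda_1 \tr(M) \ge \tr(M^2)$, whence
\[
\rho(M) = \lambda_1 \ge \frac{\tr(M^2)}{\tr(M)} = \frac{\tr(M^2)}{2W(G)}.
\]
So it suffices to show $\tr(M^2) \ge (2W(G))^2/(n-1)$. Because $M$ is Hermitian, $\tr(M^2) = \sum_{i,j}|M_{ij}|^2$; using $|\varphi^{<}_{\max}| = 1$ the off-diagonal moduli are exactly the distances, so $\tr(M^2) = \sum_i \Tr(v_i)^2 + \sum_{i \ne j} d(v_i,v_j)^2$.

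I would then apply Cauchy--Schwarz twice: to the $n$ transmissions, $\sum_i \Tr(v_i)^2 \ge (2W(G))^2/n$; and to the $n(n-1)$ ordered distances, $\sum_{i\ne j} d(v_i,v_j)^2 \ge \big(\sum_{i \ne j} d(v_i,v_j)\big)^2/\big(n(n-1)\big) = (2W(G))^2/(n(n-1))$. Adding gives
\[
\tr(M^2) \ge \frac{(2W(G))^2}{n} + \frac{(2W(G))^2}{n(n-1)} = \frac{(2W(G))^2}{n-1},
\]
which combined with the previous display yields $\rho(\mathcal{DL}^{\max}_{<}(\Phi)) \ge 2W(G)/(n-1)$. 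The argument for $\mathcal{DL}^{\min}_{<}(\Phi)$ is identical, as only the moduli of the entries (all equal to distances) enter. For sharpness I would take $G = K_n$ with trivial (balanced) gain: then $\mathcal{DL} = nI - J$ has spectral radius $n = 2W(K_n)/(n-1)$, and this is precisely where both Cauchy--Schwarz steps and the eigenvalue step become equalities (transmission-regularity, unit distances, and a single repeated nonzero eigenvalue).

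The main obstacle to flag is that the seemingly natural route --- dividing $\tr(M) = 2W(G)$ by the number of positive eigenvalues --- only delivers the bound in the balanced case, where Theorem \ref{th3.5} forces nullity $1$ and hence $n-1$ positive eigenvalues. In the unbalanced case all $n$ eigenvalues are positive and that averaging gives only $2W(G)/n$, which is too weak. The point of the $\tr(M^2)$ route is that the two Cauchy--Schwarz estimates together contribute exactly the extra amount that upgrades $1/n$ to $1/(n-1)$, so the proof is uniform and needs no case split on balance. I would double-check the equality analysis to confirm that the bound is attained only at complete graphs.
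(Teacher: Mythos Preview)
Your proof is correct and takes a genuinely different route from the paper's. The paper proceeds via a Cholesky factorisation $\mathcal{DL}^{\max}_{<}(\Phi)=MM^{*}$ and then lower-bounds $\rho(MM^{*})=\rho(M^{*}M)$ by the single diagonal entry of $M^{*}M$ corresponding to a vertex $v_1$ of \emph{maximum} transmission; one Cauchy--Schwarz on the distances from $v_1$ gives $(M^{*}M)_{11}\ge C(1)\cdot n/(n-1)$, and then $C(1)\ge 2W(G)/n$ finishes. Your argument is instead global: you use the eigenvalue inequality $\lambda_1\tr(M)\ge\tr(M^2)$ and bound $\tr(M^2)=\sum_i\Tr(v_i)^2+\sum_{i\ne j}d(v_i,v_j)^2$ by two Cauchy--Schwarz applications, whose sum telescopes exactly to $(2W(G))^2/(n-1)$. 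Your route is a bit more elementary (no Cholesky, no appeal to $\rho(MM^{*})=\rho(M^{*}M)$), while the paper's route isolates a single extremal vertex and would adapt more readily to bounds phrased in terms of the maximum transmission rather than the Wiener index. Both handle sharpness the same way, via $(K_n,1)$, and your closing remark that naive trace-averaging over the positive eigenvalues fails in the unbalanced case (giving only $2W(G)/n$) is a correct and worthwhile observation.
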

\begin{proof}(i) Let $ \Phi=(G, \varphi) $ be a connected $ \mathbb{T} $-gain graph of $ n $ vertices with a vertex ordering $ < $. Then by Proposition \ref{prop.4.1}, $ \mathcal{DL}^{\max}_<(\Phi) $ is positive semi-definite. Using Cholesky decomposition, there is a lower triangular matrix $ M $ with non negative diagonal such that \begin{equation}\label{eq1}
		\mathcal{DL}^{\max}_<(\Phi) =MM^*
	\end{equation}
	Let $ l_{ij} $ and $ m_{ij} $ be the $ (i,j) $th entry of $ \mathcal{DL}^{\max}_<(\Phi) $ and $ M $, respectively. Let $ C(i):=\sum\limits_{j=1}^{n}d(v_i,v_j) $, for $ i=1,2, \cdots, n $. Without loss of generality, consider $ C(1)=\max\limits_{i} C(i) $. Comparing $ (1,1) $th entry of matrices from both side of the equation \eqref{eq1}, we have $ l_{11}=C(1)=|m_{11}|^{2} $. Also comparing first column of matrices in equation \eqref{eq1}, $ l_{i1}=m_{i1}\overline{m}_{11} $, for $ i=1,2, \cdots,n $. Thus $ |l_{i1}|^{2}=|m_{i1}|^{2}|m_{11}|^{2} $, for $ i=1,2, \cdots,n $. Now the $ (1,1) $th entry of $ M^*M $ is $ \sum\limits_{i=1}^{n}|m_{i1}|^2 $. Then 
	\begin{align*}
		\sum\limits_{i=1}^{n}|m_{i1}|^2 &=\frac{1}{|m_{11}|^2}\sum\limits_{i=1}^{n}|l_{i1}|^2\\
		&=\frac{1}{C(1)}\left( C(1)^2+d(v_1,v_2)^2+\cdots+d(v_1,v_n)^2\right)\\
		&=C(1)+\frac{d(v_1,v_2)^2+\cdots+d(v_1,v_n)^2}{d(v_1,v_2)+\cdots+d(v_1,v_n)}
	\end{align*}

By Cauchy-Schwartz inequality, $ \frac{d(v_1,v_2)^2+\cdots+d(v_1,v_n)^2}{d(v_1,v_2)+\cdots+d(v_1,v_n)} \geq \frac{d(v_1,v_2)+\cdots+d(v_1,v_n)}{n-1}=\frac{C(1)}{n-1}$. Since $ \frac{\sum\limits_{i=1}^{n}C(i)}{n}=\frac{2W(G)}{n}$, so $ C(1)\geq \frac{2W(G)}{n}$. Therefore, 
\begin{align*}
	\sum\limits_{i=1}^{n}|m_{i1}|^2\geq \frac{2W(G)}{n-1}.
\end{align*}

Thus $ \rho(\mathcal{DL}^{\max}_<(\Phi))\geq \frac{2W(G)}{n-1} $.
Let us consider a balanced complete graph $ \Phi=(K_n,1) $. Then $ \mathcal{DL}^{\max}_{<}(\Phi)= nI-J_n$, where $ I $ is the identity matrix and $ J_n $ is a $ n \times n $ matrix with all entries are $ 1 $. Thus $ \rho(\mathcal{DL}^{\max}_{<}(\Phi))=n-1=\frac{2W(n)}{n-1}$. Equality occurs. Thus the inequality (i) is sharp. Similarly, $ \rho(\mathcal{DL}^{\min}_<(\Phi))\geq \frac{2W(G)}{n-1} $ and also the inequality is sharp.
\end{proof}

Let us present an example to show if $ \Phi $ is not distance compatible then $ \Phi \sim \Psi $ need not revel that $ \spec(\mathcal{DL}^{\max}_<(\Phi)) =\spec(\mathcal{DL}^{\max}_<(\Psi)) $.
\begin{figure} [!htb]
	\begin{center}
		\includegraphics[scale= 0.45]{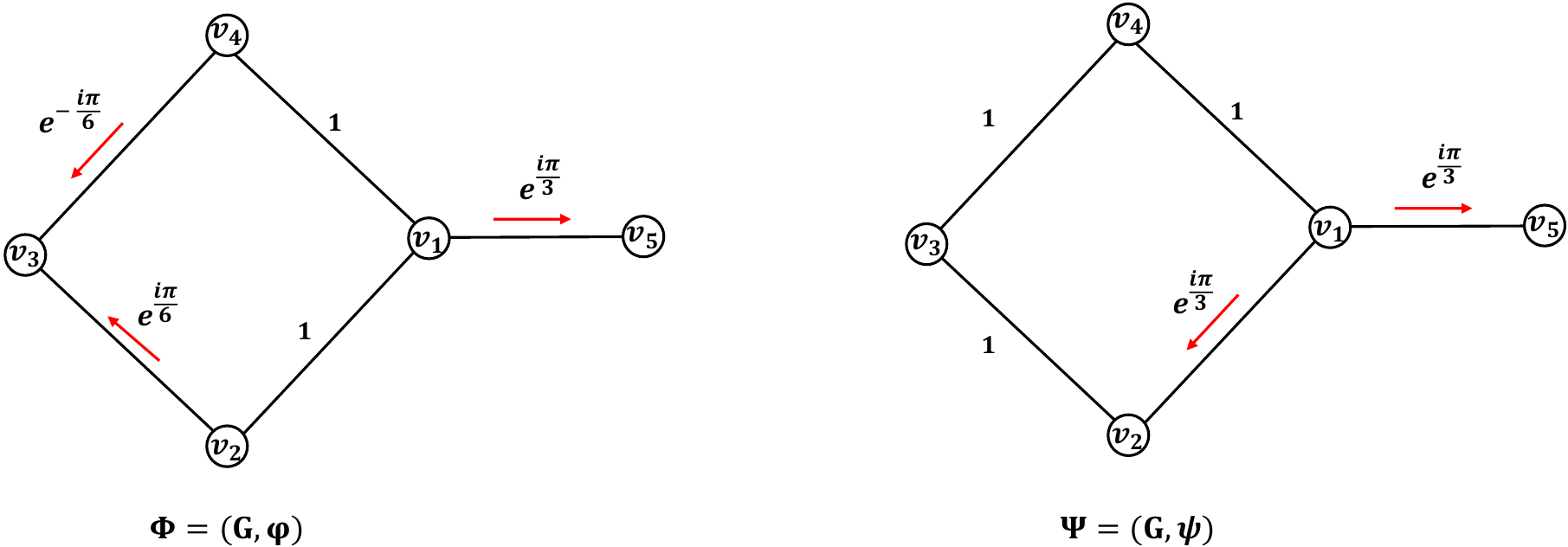}
		\caption{ Two switching equivalent $ \mathbb{T} $-gain graphs $ \Phi=(G, \varphi) $ and $ \Psi=(G, \psi) $} \label{fig1}
	\end{center}
\end{figure}

\begin{example}\label{ex4.1}{\rm
	Let $ \Phi=(G, \varphi) $ and $\Psi=(G, \psi)$ be two switching equivalent $ \mathbb{T} $-gain graphs with standard vertex ordering $ < $ (see Figure \ref{fig1}). Since $ \mathcal{D}^{\max}_{<}(\Phi) \ne \mathcal{D}^{\max}_{<_r}(\Phi) $, so $ \Phi $ is not distance compatible. Now
	\begin{equation*}
		\mathcal{DL}^{\max}_{<}(\Phi)=\left[\begin{array}{ccccc}
			5 & -1 & -2e^{\frac{i\pi}{6}} & -1 & -e^{\frac{i\pi}{3}}\\
			-1 & 6 & -e^{\frac{i\pi}{6}} & -2 & -2e^{\frac{i\pi}{3}}\\
			-2e^{-\frac{i\pi}{6}} & -e^{-\frac{i\pi}{6}} & 7 & -e^{\frac{i\pi}{6}} & -3e^{\frac{i\pi}{6}}\\
			-1 & -2 & -e^{-\frac{i\pi}{6}} & 6 & -2e^{\frac{i\pi}{3}}\\
			-e^{-\frac{i\pi}{3}} & -2e^{-\frac{i\pi}{3}} & -3e^{-\frac{i\pi}{6}} & -2e^{-\frac{i\pi}{3}} & 8
		\end{array} \right].
	\end{equation*}
and 	
	\begin{equation*}
	\mathcal{DL}^{\max}_{<}(\Psi)=\left[\begin{array}{ccccc}
		5 & -e^{\frac{i\pi}{3}}  & -2e^{\frac{i\pi}{3}} & -1 & -e^{\frac{i\pi}{3}}\\
		-e^{-\frac{i\pi}{3}} & 6 & -1 & -2 & -2\\
		-2e^{-\frac{i\pi}{3}} & -1 & 7 & -1 & -3e^{\frac{i\pi}{3}}\\
		-1 & -2 & -1 & 6 & -2e^{\frac{i\pi}{3}}\\
		-e^{-\frac{i\pi}{3}} & -2 & -3e^{-\frac{i\pi}{3}} & -2e^{-\frac{i\pi}{3}} & 8
	\end{array} \right].
\end{equation*}

Then $\spec(\mathcal{DL}^{\max}_{<}(\Phi))\ne 	\spec(\mathcal{DL}^{\max}_{<}(\Psi)) $.}	
\end{example}

However, if a $ \mathbb{T} $-gain graph is distance compatible then $ \Phi \sim \Psi$ implies that their distance Laplacian spectrum will be same.

\begin{theorem}\label{th4.8}
	Let $ \Phi=(G, \varphi)$ be a distance compatible connected $ \mathbb{T}$-gain graph. If $ \Phi \sim \Psi $ then $ \Psi $ is distance compatible and $ \spec(\mathcal{DL}(\Phi))=\spec(\mathcal{DL}(\Psi)) $. 
\end{theorem}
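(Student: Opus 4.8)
The plan is to exhibit the distance Laplacians of $\Phi$ and $\Psi$ as unitarily similar matrices, with the conjugating unitary supplied directly by the switching function; cospectrality is then immediate. First I would record the leverage that distance compatibility provides: if $\Phi$ is distance compatible, then $\varphi^{<}_{\max}(s,t)=\varphi^{<}_{\min}(s,t)$ for every pair $s,t$, which forces every shortest $s$–$t$ path to carry one and the same gain. Consequently the $(i,j)$ entry of $\mathcal{D}(\Phi)$ equals $\varphi(v_iPv_j)\,d(v_i,v_j)$ for \emph{any} shortest path $P$, independent of the path chosen.

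Second, I would track how path gains transform under switching. Starting from $\varphi(\vec{e}_{i,j})=\zeta(v_i)^{-1}\psi(\vec{e}_{i,j})\zeta(v_j)$ and multiplying along the edges of a path $sPt$, the intermediate factors $\zeta(v)$ telescope (the $\zeta$-values are scalars and commute), yielding $\varphi(sPt)=\zeta(s)^{-1}\zeta(t)\,\psi(sPt)$, equivalently $\psi(sPt)=\zeta(s)\,\zeta(t)^{-1}\varphi(sPt)$. Thus the gain of a shortest path in $\Psi$ differs from its gain in $\Phi$ only by the fixed factor $\zeta(s)\zeta(t)^{-1}$ attached to the endpoints. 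Since all shortest $s$–$t$ paths share one gain in $\Phi$, the same holds in $\Psi$; this re-establishes that $\Psi$ is distance compatible (as already guaranteed by Lemma \ref{Th0.1}, so that $\mathcal{D}(\Psi)$ and $\mathcal{DL}(\Psi)$ are well defined) and, more importantly, pins down the entries of $\mathcal{D}(\Psi)$ as $[\mathcal{D}(\Psi)]_{ij}=\zeta(v_i)\zeta(v_j)^{-1}[\mathcal{D}(\Phi)]_{ij}$.

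Third, I would set $P=\diag(\zeta(v_1),\ldots,\zeta(v_n))$, a unitary diagonal matrix, and observe that the entrywise relation above is exactly $\mathcal{D}(\Psi)=P\,\mathcal{D}(\Phi)\,P^{*}$, using $\overline{\zeta(v_j)}=\zeta(v_j)^{-1}$. The transmission matrix $\Tr(G)$ depends only on the underlying graph $G$, hence coincides for $\Phi$ and $\Psi$, and being diagonal it commutes with $P$, so $P\,\Tr(G)\,P^{*}=\Tr(G)$. Subtracting gives $\mathcal{DL}(\Psi)=\Tr(G)-\mathcal{D}(\Psi)=P\bigl(\Tr(G)-\mathcal{D}(\Phi)\bigr)P^{*}=P\,\mathcal{DL}(\Phi)\,P^{*}$. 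A unitary similarity preserves the spectrum, whence $\spec(\mathcal{DL}(\Phi))=\spec(\mathcal{DL}(\Psi))$.

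I do not expect a genuine obstacle; the only step demanding care is the clean telescoping of the $\zeta$-factors together with the use of distance compatibility to guarantee that $\mathcal{D}(\Psi)=P\,\mathcal{D}(\Phi)\,P^{*}$ holds entry by entry. Without distance compatibility the $\max$/$\min$ selection of representative shortest paths in $\Phi$ and in $\Psi$ can fall on incompatible paths after the rotation by $\zeta(s)\zeta(t)^{-1}$ (which need not respect the lexicographic ordering on gains), and this is precisely the failure illustrated in Example \ref{ex4.1}. The argument otherwise mirrors the conjugation computation already carried out in the proof of Theorem \ref{th3.4}.
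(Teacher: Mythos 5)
Your proposal is correct and follows essentially the same route as the paper: use distance compatibility to get a well-defined gain for each pair of vertices, telescope the switching function along shortest paths to relate the entries of $\mathcal{D}(\Phi)$ and $\mathcal{D}(\Psi)$, and then realize $\mathcal{DL}(\Psi)$ as a conjugate of $\mathcal{DL}(\Phi)$ by the diagonal unitary $\diag(\zeta(v_1),\dots,\zeta(v_n))$. Your write-up is in fact slightly more explicit than the paper's (spelling out the telescoping and the commutation of $\Tr(G)$ with the diagonal unitary), but the argument is the same.
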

\begin{proof}Let $\Phi=(G, \varphi)$ be a distance compatible connected $ \mathbb{T} $-gain graph with vertex set $ V(G)=\{ v_1, v_2, \cdots, v_n\} $. Then any oriented shortest path $ v_iPv_j $ from $ v_i $ to $ v_j $ having the same gain in $ \Phi $. Also $ \Phi \sim \Psi $, so there is a switching function $ \zeta $ such that $ \psi(v_iPv_j)=\zeta(v_i)^{-1}\varphi(v_iPv_j)\zeta(v_j)$. Therefore, for any oriented shortest path from $ v_i$ to $ v_j$ having the unique gain $ \psi(v_iPv_j) $. Thus $ \Psi $ is distance compatible. Let $ d_\varphi(v_i, v_j) $ and $ d_\psi(v_i,v_j) $ be the unique distance from $ v_i $ to $ v_j $ in $ \Phi $ and $ \Psi $, respectively. Then $ d_\psi(v_i,v_j)= \zeta(v_i)^{-1}d_\varphi(v_i,v_j)\zeta(v_j)$. Let $ U=diag(\zeta(v_1), \zeta(v_2), \cdots, \zeta(v_n))$. Then $ \mathcal{D}(\Psi)=U^*\mathcal{D}(\Phi)U $ and hence $ \mathcal{DL}(\Phi)=U\mathcal{DL}(\Psi)U^* $.
\end{proof}

It is to be observe that if a $ \mathbb{T}$-gain graph $ \Phi=(G, \varphi) $ is distance compatible, then for any pair of vertices $ (v_i, v_j) $, there is a unique auxiliary gain. That is, $ \varphi^{<}_{\max}(v_i,v_j)=\varphi^{<_r}_{\max}(v_i,v_j)=\varphi^{<}_{\min}(v_i,v_j)=\varphi^{<_r}_{\min}(v_i,v_j)$, for any vertex ordering $ < $. Then the complete $ \mathbb{T} $-gain graph associated with $ \mathcal{D}(\Phi) $ is $ K^\mathcal{D}(\Phi):=(K_n, \psi) $, where $ \psi(\vec{e}_{i,j}):=\varphi^{<}_{\max}(v_i,v_j)=\varphi^{<}_{\min}(v_i,v_j) $, for all $ v_i, v_j\in V(G)$. A $ \mathbb{T} $-gain graph $ \Phi=(G, \varphi) $ is called \emph{anti-balanced} if $ -\Phi=(G, -\varphi) $ is balanced. Now we established the following upper bound.

\begin{theorem}\label{th4.9}
	Let $ \Phi=(G, \varphi) $ be a connected distance compatible $ \mathbb{T} $-gain graph. Then $ \rho(\mathcal{DL}(\Phi)) \leq \rho(\mathcal{DQ}(G)) $. Equality occur if and only if $ \Phi \sim (K_n, -1) $.
\end{theorem}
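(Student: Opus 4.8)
The plan is to recognize $\mathcal{DQ}(G)$ as the entrywise modulus $|\mathcal{DL}(\Phi)|$ and then invoke the Perron--Frobenius type comparison in Theorem~\ref{th4}. First I would note that, writing $N=\mathcal{DL}(\Phi)=\Tr(G)-\mathcal{D}(\Phi)$, the diagonal entries of $N$ are the nonnegative transmissions $\Tr(v_i)$ while each off-diagonal entry $-\varphi^{<}_{\max}(v_i,v_j)\,d(v_i,v_j)$ has modulus $d(v_i,v_j)$; hence $|N|=\Tr(G)+\mathcal{D}(G)=\mathcal{DQ}(G)=:M$. Since $G$ is connected with $n\geq 2$, the matrix $M$ is nonnegative with all off-diagonal entries positive, so it is irreducible, and trivially $M\geq|N|$. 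By Proposition~\ref{prop.4.1}, $N$ is positive semidefinite, so $\rho(N)$ is itself a real nonnegative eigenvalue of $N$. Applying Theorem~\ref{th4}(1) to this eigenvalue gives $\rho(\mathcal{DQ}(G))=\rho(M)\geq\rho(N)=\rho(\mathcal{DL}(\Phi))$, which is the desired bound.

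For the equality case I would use the rigidity clause Theorem~\ref{th4}(2). If $\rho(\mathcal{DL}(\Phi))=\rho(\mathcal{DQ}(G))$, then taking $\lambda=\rho(N)=\rho(M)$ forces $e^{i\theta}=\lambda/\rho(M)=1$, so there is a diagonal unitary $U=\diag(u_1,\dots,u_n)$ with $N=UMU^{*}$. Because $U$ commutes with the diagonal matrix $\Tr(G)$, subtracting $\Tr(G)$ from both sides yields $\mathcal{D}(\Phi)=-U\mathcal{D}(G)U^{*}$; reading off the $(i,j)$ entry gives $\varphi^{<}_{\max}(v_i,v_j)=-u_i\overline{u_j}$ for every pair. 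In particular, for an adjacent pair the unique shortest path is the edge itself, so $\varphi(\vec e_{i,j})=-u_i\overline{u_j}$.

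The crux, and the step I expect to be the main obstacle, is to deduce $G=K_n$. Here I would exploit distance compatibility: for vertices at distance $k$ all shortest paths carry the common gain $\varphi^{<}_{\max}(v_i,v_j)$, and computing this gain along one shortest $v_i$--$v_j$ path by multiplying the edge gains $-u_\cdot\overline{u_\cdot}$ telescopes to $(-1)^k u_i\overline{u_j}$. Comparing with $\varphi^{<}_{\max}(v_i,v_j)=-u_i\overline{u_j}$ forces $(-1)^k=-1$, so no pair can lie at an even distance; in particular there is no pair at distance $2$. But if $G$ were not complete it would contain a vertex lying on a shortest path of length $\geq 2$, producing a distance-$2$ pair, a contradiction. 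Hence $G=K_n$, every $\varphi(\vec e_{i,j})=-u_i\overline{u_j}$, and choosing the switching function $\zeta(v_i)=\overline{u_i}$ exhibits $\Phi\sim(K_n,-1)$.

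Finally, for the converse I would check that $\Phi\sim(K_n,-1)$ forces $G=K_n$, since switching preserves the underlying graph, and that $\Phi$ is distance compatible, so Theorem~\ref{th4.8} gives $\spec(\mathcal{DL}(\Phi))=\spec(\mathcal{DL}(K_n,-1))$. A direct computation shows $\mathcal{DL}(K_n,-1)=(n-2)I+J=\mathcal{DQ}(K_n)$, whose spectral radius is $2(n-1)$; thus $\rho(\mathcal{DL}(\Phi))=2(n-1)=\rho(\mathcal{DQ}(G))$, completing the equivalence.
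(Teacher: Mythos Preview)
Your proof is correct and follows essentially the same route as the paper: identify $|\mathcal{DL}(\Phi)|=\mathcal{DQ}(G)$, apply Theorem~\ref{th4}, use positive semidefiniteness to force $e^{i\theta}=1$, and from $\mathcal{D}(\Phi)=-U\mathcal{D}(G)U^{*}$ deduce that $G$ must be complete. The only cosmetic difference is in how the completeness of $G$ is argued: the paper rephrases $-A(K^{\mathcal{D}}(\Phi))=UA(K_n)U^{*}$ as ``$K^{\mathcal{D}}(\Phi)$ is anti-balanced'' and then checks that a distance-$2$ pair would create a neutral triangle in $K^{\mathcal{D}}(\Phi)$, whereas you telescope the edge gains $-u_{\cdot}\overline{u_{\cdot}}$ along a shortest path to get $(-1)^{k}=-1$; these are the same contradiction in different language.
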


\begin{proof}
Let $ \Phi=(G, \varphi) $ be a connected $ \mathbb{T} $-gain graph on $ n $ vertices. Then $ |\mathcal{DL}(\Phi)|=\mathcal{DQ}(G) $. Since $ |\mathcal{DL}(\Phi)|\leq \mathcal{DQ}(G) $, so by Theorem \ref{th4}(1), 
\begin{equation}\label{eq1}
	\rho(\mathcal{DL}(\Phi)) \leq \rho(\mathcal{DQ}(G))
\end{equation}
By Theorem \ref{th4}(2), Equality occur in \eqref{eq1} if and only if $ \mathcal{DL}(\Phi)=e^{i\theta}U(\mathcal{DQ}(G))U^* $, where $ |U|=I $.  Since $ \mathcal{DL}(\Phi) $ is positive semi-definite, so $ \rho(\mathcal{DL}(\Phi)) $ is the largest eigenvalue of $ \mathcal{DL}(\Phi) $. Thus $ e^{i\theta}=1$. Therefore $ \rho(\mathcal{DL}(\Phi))= \rho(\mathcal{DQ}(G)) $ if and only if \begin{equation}\label{eq3}
	\mathcal{DL}(\Phi)=U(\mathcal{DQ}(G))U^*
\end{equation}
Now form $ \eqref{eq3} $, 
\begin{align*}
	\mathcal{DL}(\Phi)=U(\mathcal{DQ}(G))U^* & \Leftrightarrow  Tr(G)-\mathcal{D}(\Phi)=U\left(Tr(G)+\mathcal{D}(G)\right)U^*\\
	 & \Leftrightarrow -\mathcal{D}(\Phi)=U\mathcal{D}(G)U^*\\
     &\Leftrightarrow  -A(K^\mathcal{D}(\Phi))\circ \mathcal{D}(G)=U\left( A(K_n)\circ \mathcal{D}(G)\right)U^*\\
	 & \Leftrightarrow-A(K^\mathcal{D}(\Phi))\circ \mathcal{D}(G)=\left( UA(K_n)U^*\right)\circ \mathcal{D}(G)
\end{align*}
Then there is a matrix $ S $ such that $ D(G)\circ S=A(K_n) $.
Thus $$ -A(K^\mathcal{D}(\Phi))= UA(K_n)U^*$$. That is, $ K^\mathcal{D}(\Phi) $ is anti balanced.

Suppose $ \rho(\mathcal{DL}(\Phi))= \rho(\mathcal{DQ}(G)) $. \\
\textbf{Claim:} $ G $ is complete.\\
Let $ K^{\mathcal{D}}(\Phi)=(K_n, \psi) $, where $ \psi(\vec{e}_{i,j})=\varphi(v_iPv_j) $ for any shortest path $ v_iPv_j $. If possible let $ G $ is not complete. Then there is a pair of non adjacent vertices $ v_s, v_t $ in $ G $ such that $ d(v_s,v_t)=2 $. Let $ v_s \sim v_r \sim v_t $ be a path in $ G $. Then by the definition of $ K^\mathcal{D}(\Phi) $, $ \psi(C_3)=1 $, where $ C_3: v_s \sim v_r \sim v_t\sim v_s $. Thus $ -\psi(C_3)=-1 $. Which contradict that $ K^\mathcal{D}(\Phi) $ is anti balanced. Hence $ G $ is complete.

Therefore $ K^\mathcal{D}(\Phi)=\Phi $. Thus $ \Phi $ is anti balanced complete $ \mathbb{T} $-gain graph. Conversely, let $ \Phi \sim (K_n, -1) $. Then by Theorem \ref{th4.8}, $ \spec(\mathcal{DL}(\Phi)) =\spec(\mathcal{DL}(K_n,-1))$. Now $ \mathcal{DQ}(K_n)=Tr(K_n)+\mathcal{D}(K_n)=\mathcal{DL}(K_n,-1) $. Therefore $ \rho(\mathcal{DL}(\Phi))=\rho(\mathcal{DQ}(K_n)) $.
\end{proof}

Now we present an upper bound for spectral radius of gain distance Laplacian matrix in terms of the maximum vertex transmission. 

\begin{theorem} \label{th5.4}
Let $\Phi=(G, \varphi) $ be a connected distance compatible $ \mathbb{T} $-gain graph. Then 
\begin{equation}\label{eq5}
	\rho(\mathcal{DL}(\Phi))\leq 2 \max_{v\in V(G)}Tr(v).
\end{equation}
Equality occur if and only if $ \Phi \sim (K_n, -1) $.	
\end{theorem}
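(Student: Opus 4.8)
The plan is to sandwich $\rho(\mathcal{DL}(\Phi))$ between $\rho(\mathcal{DQ}(G))$ and $2\max_{v}Tr(v)$, and then read off the equality case from Theorem \ref{th4.9}. First I would record two facts. By Proposition \ref{prop.4.1} the matrix $\mathcal{DL}(\Phi)$ is positive semi-definite, so $\rho(\mathcal{DL}(\Phi))$ is its largest eigenvalue. Since $\Phi$ is distance compatible, $\mathcal{D}(\Phi)$ is well defined and each of its off-diagonal entries is a unit-modulus gain times the corresponding distance; hence taking entrywise moduli gives $|\mathcal{DL}(\Phi)| = Tr(G) + \mathcal{D}(G) = \mathcal{DQ}(G)$.

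Next I would establish the chain $\rho(\mathcal{DL}(\Phi)) \le \rho(\mathcal{DQ}(G)) \le 2\max_{v}Tr(v)$. The matrix $\mathcal{DQ}(G)$ is non-negative, and because $G$ is connected all its off-diagonal entries are positive, so it is irreducible. As $\mathcal{DQ}(G) = |\mathcal{DL}(\Phi)|$, Theorem \ref{th4}(1) gives the first inequality $\rho(\mathcal{DL}(\Phi)) \le \rho(\mathcal{DQ}(G))$. For the second, the $i$-th row sum of $\mathcal{DQ}(G)$ equals $Tr(v_i) + \sum_{j}d(v_i,v_j) = 2Tr(v_i)$; since $\mathcal{DQ}(G)$ is real symmetric, non-negative and irreducible, its Perron eigenvector for $\rho(\mathcal{DQ}(G))$ can be taken non-negative, so Lemma \ref{lm2.1} yields $\rho(\mathcal{DQ}(G)) \le \max_{i}2Tr(v_i) = 2\max_{v}Tr(v)$. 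Combining the two bounds proves \eqref{eq5}.

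For the equality I would argue both directions quickly. If equality holds in \eqref{eq5}, the whole chain collapses, so in particular $\rho(\mathcal{DL}(\Phi)) = \rho(\mathcal{DQ}(G))$, and Theorem \ref{th4.9} forces $\Phi \sim (K_n,-1)$. Conversely, if $\Phi \sim (K_n,-1)$ then the underlying graph is $K_n$, so $Tr(v) = n-1$ for every $v$; by Theorem \ref{th4.8} we have $\rho(\mathcal{DL}(\Phi)) = \rho(\mathcal{DL}(K_n,-1))$, and since $\mathcal{DL}(K_n,-1) = \mathcal{DQ}(K_n) = (n-2)I + J_n$ has largest eigenvalue $2(n-1)$, we obtain $\rho(\mathcal{DL}(\Phi)) = 2(n-1) = 2\max_{v}Tr(v)$.

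The routine content is the identity $|\mathcal{DL}(\Phi)| = \mathcal{DQ}(G)$ and the row-sum computation; all of the real leverage is supplied by the cited inequalities. The point to be careful about is the equality direction: rather than re-running the anti-balance argument of Theorem \ref{th4.9}, I would simply observe that overall equality forces the first inequality to be an equality and invoke Theorem \ref{th4.9} directly, verifying the converse through the explicit computation on $K_n$ together with the switching invariance of Theorem \ref{th4.8}.
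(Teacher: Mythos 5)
Your proposal is correct and follows essentially the same route as the paper: both chain $\rho(\mathcal{DL}(\Phi))\le\rho(\mathcal{DQ}(G))$ (the paper cites Theorem \ref{th4.9}, you re-derive it from $|\mathcal{DL}(\Phi)|=\mathcal{DQ}(G)$ and Theorem \ref{th4}) with the row-sum bound $\rho(\mathcal{DQ}(G))\le 2\max_{v}Tr(v)$ obtained from the Perron eigenvector and Lemma \ref{lm2.1}, and both settle equality by collapsing the chain onto the equality case of Theorem \ref{th4.9} and checking $(K_n,-1)$ directly. Your write-up is in fact slightly more explicit than the paper's about why equality in \eqref{eq5} forces $\rho(\mathcal{DL}(\Phi))=\rho(\mathcal{DQ}(G))$.
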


\begin{proof}
	Let $ \Phi=(G, \varphi) $ be a distance compatible $ \mathbb{T} $-gain graph. Then $ \mathcal{DQ}(G)=Tr(G)+D(G) $. Let $ \lambda $ be the largest eigenvalue of $\mathcal{DQ}(G)$ associated with an eigenvalue $ x $. Therefore by Perron-Frobenius theorem $ \rho(\mathcal{DQ}(G))=\lambda$ and $ x>0 $. Thus by Lemma \ref{lm2.1},\break $ \rho(\mathcal{DQ}(G)) \leq 2\max\limits_{v\in V(G)}Tr(v) $. Theorem \ref{th4.9} revels that $ \rho(\mathcal{DL}(\Phi)) \leq 2\max\limits_{v\in V(G)}Tr(v)$. Suppose  equality occur in \eqref{eq5}. By Theorem \ref{th4.9}, $ \Phi \sim (K_n, -1) $. Conversely let $ \Phi \sim (K_n, -1) $. Then  $\rho(\mathcal{DL}(\Phi))=2(n-1)=2\max\limits_{v\in V(G)}Tr(v)$.  
\end{proof}

\begin{remark}
	Theorem \ref{th5.1}, Theorem \ref{th5.4} and Theorem \ref{th4.9} hold for signed graphs.
\end{remark}
%=========================================== 
 \section*{Acknowledgments}
	Aniruddha Samanta expresses thanks to the National Board for Higher Mathematics (NBHM), Department of Atomic Energy, India, for providing financial support in the form of an NBHM Post-doctoral Fellowship (Sanction Order No. 0204/21/2023/R\&D-II/10038). The first author also acknowledges excellent working conditions in the Theoretical Statistics and Mathematics Unit, Indian Statistical Institute Kolkata.

\mbox{}

	%\bibliography{raj_ani_ref188}

\end{document}